\theoremstyle{plain}
\newtheorem{thm}{Theorem}[section]
\newtheorem{prop}[thm]{Proposition}
\newtheorem{ass}{Assumption}
\newtheorem{lem}[thm]{Lemma}
\newtheorem{rem}[thm]{Remark}
\newcommand\auxY{\widetilde{X}}
\newcommand\indiq{{\mathbf 1}}
\newcommand\esp[1]{\mathbb{E}\left[#1\right]}
\newcommand\espc[2]{\mathbb{E}\left[\left.#1\right|#2\right]}
\newcommand\N{\n}
\newcommand\n{\mathbb{N}}
\renewcommand\r{\mathbb{R}}
\newcommand\R{\r}
\newcommand\E{\mathbb{E}}
\newcommand\F{\mathcal{F}}
\newcommand\xd{\bar X}
\newcommand\cE{\mathcal{E}}
\newcommand\cR{\mathcal{R}}
\newcommand\xnone{X^{N, 1 }}
\newcommand\limY{\bar{X}}
\newcommand\W{\mathcal{W}}
\newcommand\C{\mathcal{C}}
\begin{document}

\begin{frontmatter}

\title{Strong error bounds for the convergence to its mean field limit  for systems of interacting neurons in a diffusive scaling}

\runtitle{Strong error bounds in conditional propagation of chaos}

\begin{aug}
\author[A]{\fnms{Xavier}~\snm{Erny}\ead[label=e1]{xavier.erny@polytechnique.edu}},
\author[B]{\fnms{Eva}~\snm{L\"ocherbach}\ead[label=e2]{eva.locherbach@univ-paris1.fr}}
\and
\author[C]{\fnms{Dasha}~\snm{Loukianova}\ead[label=e3]{dasha.loukianova@univ-evry.fr}}

\address[A]{CMAP, CNRS, \'Ecole polytechnique, Institut Polytechnique de Paris, 91120 Palaiseau, France \printead[presep={,\ }]{e1}}

\address[B]{Statistique, Analyse et Mod\'elisation Multidisciplinaire, Universit\'e Paris 1 Panth\'eon-Sorbonne, EA 4543 \printead[presep={,\ }]{e2}}

\address[C]{Laboratoire de Math\'ematiques et Mod\'elisation d'\'Evry,  Universit\'e
  d'\'Evry Val d'Essonne, UMR CNRS 8071 \printead[presep={,\ }]{e3}}
\end{aug}

\begin{abstract}
We consider the stochastic system of interacting neurons introduced in
 \cite{de_masi_hydrodynamic_2015} and in \cite{fournier_toy_2016} and then further studied in \cite{ELL_2020}  in a diffusive scaling. The system consists of $N$ neurons, each spiking randomly with rate depending on its membrane potential. 
At its spiking time, the potential of the spiking neuron  is reset to $0$ and all other neurons
receive an additional amount of potential which is a centred random variable of order $ 1 / \sqrt{N}.$ In between successive spikes, each neuron's potential follows a 
deterministic flow.  In our previous article \cite{ELL_2020}  we proved the convergence of the system, as $N \to \infty$, to a limit nonlinear
jumping stochastic differential 
equation.  In the present article we complete this study by establishing a strong convergence result, stated with respect to an appropriate distance, with an explicit rate of convergence. The main technical ingredient of our proof is the coupling introduced in \cite{komlos_approximation_1976} of the point process representing the small jumps of the particle system with the limit Brownian motion.\\
\end{abstract}
\begin{keyword}[class=MSC]
\kwd[Primary ]{ 60J76}
\kwd{60K35}
\kwd{60G55}
\kwd{60G09}
\kwd[; secondary ]{60G09}
\end{keyword}

\begin{keyword}
\kwd{Mean field interaction}
\kwd{Conditional propagation of chaos}
\kwd{Exchangeability}
\kwd{KMT coupling}
\end{keyword}

\end{frontmatter}

\section{Introduction}
In the present paper we continue the study started in \cite{ELL_2020} and establish strong error bounds for a mean field limit of systems of interacting neurons in a diffusive scaling. More precisely, we are interested in  the large population limit of the Markov process $X^N = (X^N_t)_{t \geq 0 }, $ $X^N_t = (X^{N, 1 }_t, \ldots , X^{N, N}_t )$ which takes values in $\r^N$  and has generator $A^N$ given by 
\begin{equation}\label{eq:dynintro}
A^N  \varphi ( x) = - \alpha \sum_{i=1}^N \partial_{x^i} \varphi (x) x^i + \sum_{i=1}^N f (x^i) \int_\R \nu ( du ) \left( \varphi ( x - x^i e_i + \sum_{j\neq i } \frac{u}{\sqrt{N}} e_j ) - \varphi ( x) \right) ,
\end{equation}
for any smooth test function $ \varphi : \R^N \to \R .$ In the above formula,  $ x= (x^1, \ldots, x^N) \in \r^N$ is the vector of membrane potential values of the $N$ neurons and  $ e_j $ denotes the $j-$th unit vector in $ \R^N.$ Moreover, $ \alpha > 0 $ is a fixed parameter, $ f : \R \to \R_+$ a Lipschitz continuous rate function and $ \nu $ a centred probability measure on $\R$ having a second moment. System \eqref{eq:dynintro} is a version of the model of interacting neurons considered in \cite{de_masi_hydrodynamic_2015}, inspired by  \cite{galves_infinite_2013}, and then further studied, among others, in  \cite{fournier_toy_2016}, \cite{touboul} and \cite{cormier}. 
The system consists of $N$ interacting and spiking neurons represented by their membrane potential values. Spiking occurs randomly following a point process of rate $f (x) $ for any neuron of which the membrane potential equals $x.$  Each time a neuron emits a spike, the potentials of all other neurons receive an additional amount of potential. In \cite{de_masi_hydrodynamic_2015}, \cite{fournier_toy_2016}, \cite{touboul} and \cite{cormier} this amount is of order $N^{-1}, $ leading to classical mean field limits as $ N \to \infty .$ On the contrary to this, here we continue the work started in \cite{ELL_2020} where we consider a {\it diffusive scaling} in which all neurons $j$ receive the same random quantity  $U/\sqrt{N}$ at spike times $t$ of neuron $i, i \neq j .$ The random variable $U$ is distributed according to the fixed probability measure $ \nu, $ it is chosen independently of anything else at each spike time and it is centred modeling the fact that the synaptic weights are balanced.  Moreover, right after its spike, the potential of the spiking neuron~$i$ is reset to~0, interpreted as resting potential. Finally, in between successive spikes, each neuron has a loss of potential of rate~$\alpha$.

In \cite{ELL_2020} we have established a weak convergence result, namely, we have proved that for all $ K > 0,$ the joint law $ {\mathcal L }(X^{N, 1, }, \ldots , X^{N, K} )$ converges weakly to a limit law ${\mathcal L} ( \limY^1, \ldots, \limY^K) $ in $D (\R_+, \R )^K .$ Here, the limit process $ ( \limY^i)_{i \geq 1 } $ is an infinitely exchangeable system given by 
\begin{equation}
\label{eq:dynlimintro}
\limY^i_t = \limY^i_0 -\alpha   \int_0^t \limY^i_s   ds - \int_0^t \limY^i_{s- } d\bar{Z}^{i}_s + \sigma \int_0^t \sqrt{ \E ( f ( \bar X^i_s ) | {\W_s  }) } d W_s ,\; t \geq 0,   i \in \N,
\end{equation}
$\sigma^2 = \E (U^2 ) .$ 
In the above system, each counting process $ \bar{Z}^i $ has intensity $ t \mapsto f ( \limY^i_{t- }),$ $ W$ is a standard one dimensional Brownian motion, created by the central limit theorem, representing a source of {\it common noise}, and $ \W_s = \sigma \{ W_t , t \le s \} .$ 

To prove the convergence in distribution of the finite system to the above limit system, in \cite{ELL_2020} we made use of a new martingale problem, and we used the exchangeability both of the finite and the limit system. In particular, we proved there that the {\it conditional propagation of chaos} property holds, that is, in the limit system, neurons are conditionally independent, if we condition on the source of common noise. And this source of common noise is the presence of the Brownian motion $W$ which appears as a consequence of the central limit theorem. 

In the present paper, we complete the above study and prove the strong convergence of a given neuron, say $ X^{N, 1}, $ to its corresponding limit quantity $ \bar X^1.$ To do so, we couple the point processes underlying the evolution of~\eqref{eq:dynintro} with the Brownian motion appearing in the limit equation ~\eqref{eq:dynlimintro} using ideas that go back  to \cite{kurtz78}. This coupling is based on a corollary of the KMT inequality (see Theorem~1 of \cite{komlos_approximation_1976}). 

The idea of using the KMT coupling in this context is not new. It has been successfully applied to interacting particle systems in the scaling $ 1/ N, $ corresponding to the scale of the law of large numbers, in \cite{kurtz78}, an approach that has been improved in the recent paper  \cite{adrien}. To the best of our knowledge, to use this coupling in the diffusive scaling is however entirely new.

Let us now describe more precisely the strategy of our coupling. Before doing so, we precisely define the processes we are interested in and recall important results obtained before that we rely on in the sequel. 

\subsection{Notation}
In what follows, $C$ denotes an arbitrary positive constant whose values can change from line to line in an equation. We write $C_t$  if the constant depends on time $t .$ We write $C(\r_+,\r)$ for the space of continuous functions from $\r_+$ to $\r ,$ $C^k_b(\R , \R_+ )$ for the space of all bounded continuous functions from $ \R $ to $\R_+$ being $k$ times differentiable with bounded derivatives up to order $k,$ and finally $ \r^{\N} $ for the space of all functions from $ \N $ to $ \r.$ 

Finally, to ease notation, we shall write $ \bar f (x) = \sum_{i=1}^N f( x^i ), $ for any $x = (x^1, \ldots, x^N) \in \R^N.$ We will also frequently use the notation $ [s] = \max \{ n \in \N : n \le s \} $ for the integer part of a positive real number $s \in \R_+.$

\subsection{A representation by means of Poisson random measures}
Let $\nu$ and $\nu_0$ be two probability measures on $\r, $ where $\nu$ is centered. A first ingredient of our coupling is to construct the two systems using the same underlying jump noise. To do so, it is convenient to represent 
the evolution of both systems as the solution of a stochastic differential equation driven by Poisson random measures. For that sake, we consider an i.i.d. family of  Poisson measures $(\pi^i(ds, dz, du))_{i \geq 1 }$ on $\r_+  \times \r_+ \times \r $ having intensity measure $ds dz \nu (du)$, as well as an i.i.d. family $(X^{i}_0)_{i \geq 1 }$ of $\r $-valued random variables, independent of the Poisson measures, distributed according to  $ \nu_0 .$   Denoting $\bar \pi^i (ds, dz) = \pi^i ( ds, dz, \R) ,$ the projection of $\pi^i $ on its first two coordinates, 
we may represent each neuron's potential within the finite system as
\begin{eqnarray}\label{eq:dyn}
X^{N, i}_t &=&  X^{i}_0  - \alpha  \int_0^t  X^{N, i}_s   ds -  \int_{[0,t]\times\r_+ } X^{N, i}_{s-}  \indiq_{ \{ z \le  f ( X^{N, i}_{s-}) \}} \bar \pi^i (ds,dz) \\
&&+ \frac{1}{\sqrt{N}}\sum_{ j \neq i } \int_{[0,t]\times\r_+\times\r}u \indiq_{ \{ z \le  f ( X^{N, j}_{s-}) \}} \pi^j (ds,dz,du) ,\nonumber 
\end{eqnarray} 
and 
the associated limit system  $\left(\limY^i\right)_{i\geq 1}$  as
\begin{equation}\label{eq:dynlimit1}
\limY^i_t =  X^i_0 -  \int_0^t  \alpha \limY^i_s  ds -\int_{[0,t]\times\r_+ }  \limY^i_{s- } \indiq_{ \{ z \le  f ( \limY^i_{s-}) \}} \bar \pi^i  (ds,dz) 
 +  \int_0^t \sqrt{\espc{f\left(\limY^i_s\right)}{\W_s  }} d W_s ,
\end{equation}
starting from the same initial positions $ X^i_0, i \geq 1, $ and driven by the same $ \bar \pi^i, i \geq 1, $  as the finite system. 
In the above equation, $(W_s)_{s\geq 0}$ is a standard one-dimensional Brownian motion which is independent of the Poisson random measures $ (\bar \pi^i (ds, dz))_{ i \geq 1 }  $ 
and of $  X^i_0  , i \geq 1.$ Moreover, $ \W_s = \sigma \{ W_t , t \le s \} .$  

The above system \eqref{eq:dynlimit1} is an infinite exchangeable system, and $ W$ is the only source of common noise. In particular, for each fixed $ K, $ $\limY^1,\ldots, \limY^K$ are i.i.d. conditionally to $\W .$
Under appropriate assumptions on $f, \nu$ and $\nu_0$ (Assumptions \ref{ass:1} and \ref{ass:2} of Section \eqref{sec:2} below)  the existence and the well-posedness of the above systems are shown in   \cite{ELL_2020}. 
%

\subsection{The coupling}
Equations~\eqref{eq:dyn} and~\eqref{eq:dynlimit1} define already partially a coupling of the finite and the limit system, since we use the same underlying $ \bar \pi^i $ to produce the reset jumps of the spiking neurons. The next step of our coupling is to couple the Brownian motion $ W$ with the small jumps $ u/ \sqrt{N} $ appearing in the second line of \eqref{eq:dyn}. To do so, we  inspect the process 
$$\xi^N_t =\sum_{ j = 1}^N \int_{[0,t]\times\r_+\times\r}u \indiq_{ \{ z \le  f ( X^{N, j}_{s-}) \}} \pi^j (ds,dz,du)$$
appearing in \eqref{eq:dyn} above. Notice that, since the variables $u$ are centered (recall that $\int u d \nu (u) = 0$), the above expression actually defines a purely discontinuous martingale. 

Up to an error of order $ N^{- 1/2}, $ $N^{-1/2} \xi^N_t$  represents the contribution of the small jumps to the membrane potential of any fixed neuron, up to time $t.$

\subsubsection{A first naive approach}\label{sec:kurtz}
To fix ideas, we start describing a first naive approach. It turns out that this naive approach does not work in our frame - but it will give us a hint about the strategy to adopt.

Introducing 
$$ A^N_t = \int_0^t \sum_{j=1}^N f ( X^{N, j }_s) ds = \int_0^t \bar f( X^N_s) ds  ,$$ 
the time changed process $ Z:= \xi^N \circ (A^N)^{-1} $ is a centered compound Poisson process having rate $1$ and jump distribution $\nu$ (Prop. 14.6.III of \cite{DaleyII}).

A celebrated result due to \cite{komlos_approximation_1975, komlos_approximation_1976} shows that it is possible -- under the condition that the jump size distribution $ \nu$ admits exponential moments --  to construct  $Z$ together with a standard one-dimensional Brownian motion $ B, $  on the same probability space,  such that we have the pathwise control
$$ \sup_{t \geq 0} \frac{ |Z^{}_t -  B^{} _t|}{ \ln (t \vee 2) } \le E < \infty , $$
where $E$ is a random variable admitting exponential moments (Corollary 5.5 in Chapter 7 of \cite{ethier_markov_2005}). Therefore, we may rewrite the evolution of any (say the first) neuron as
\begin{eqnarray}\label{eq:firststep}
 X^{N, 1}_t &=& X^{1}_0  - \alpha  \int_0^t  X^{N, 1}_s   ds -  \int_{[0,t]\times\r_+\times\r} X^{N, 1}_{s-}  \indiq_{ \{ z \le  f ( X^{N, 1}_{s-}) \}} \pi^1 (ds,dz,du) + \frac{1}{\sqrt{N}}Z_{A_t^N} \nonumber \\
&=& X^{1}_0  - \alpha  \int_0^t  X^{N, 1}_s   ds -  \int_{[0,t]\times\r_+\times\r} X^{N, 1}_{s-}  \indiq_{ \{ z \le  f ( X^{N, 1}_{s-}) \}} \pi^1 (ds,dz,du) \nonumber  \\
&& \quad \quad  \quad \quad \quad  \quad \quad   \quad \quad \quad \quad  \quad \quad   \quad \quad \quad \quad  \quad \quad  \quad \quad  \quad \quad   \quad \quad  \quad \quad+ \frac{1}{\sqrt{N}}B_{A_t^N} + K_t ,
 \end{eqnarray}
 where $ B_{A_t^N} $ is the time changed Brownian motion, using the integrated jump intensity as time change, and where the error term is such that 
 \begin{equation}\label{eq:kt}
 | K_t | \le  \frac{\ln ( t N \|f\|_\infty)  E}{\sqrt{N}}  .
\end{equation} 
  
A natural idea is to compare \eqref{eq:firststep} to the evolution of an auxiliary particle system where the error term is absent.  
This auxiliary particle system, interpolating between the finite system $ X^N $ and the limit system $ \bar X ,$ should therefore be defined by 
\begin{equation}\label{eq:auxnaive}
 \tilde X^{N, 1}_t = X^{1}_0  - \alpha  \int_0^t  \tilde X^{N, 1}_s   ds -  \int_{[0,t]\times\r_+\times\r} \tilde X^{N, 1}_{s-}  \indiq_{ \{ z \le  f ( \tilde X^{N, 1}_{s-}) \}} \pi^1 (ds,dz,du) 
 + \frac{1}{\sqrt{N}}B_{\tilde A_t^N} ,
\end{equation}
completed with an analogous evolution for the other particles $ \tilde X^{N, j} , j \geq 2,$ such that the evolution is exchangeable, and using the same Brownian motion $ B$ and the same Poisson random measures $ \pi^i $ as in \eqref{eq:firststep}. In the above evolution, $ \tilde A^N_t =  \int_0^t \sum_{j=1}^N f ( \tilde X^{N, j }_s) ds .$ To compare the two systems, we need to compare the time changed Brownian motions, that is, to produce a control of 
$$  \frac{1}{\sqrt{N}}B_{A_t^N} -  \frac{1}{\sqrt{N}}B_{\tilde A_t^N},$$
relying on the continuity properties of the Brownian motion. Since Brownian motion is almost H\"older continuous of order $ 1/2$, up to an error term which is logarithmic, 
$$ \left|  \frac{1}{\sqrt{N}}B_{A_t^N} -  \frac{1}{\sqrt{N}}B_{\tilde A_t^N} \right| \le R \sqrt{ \frac{|A_t^N - \tilde A_t^N |}{N} } ,$$
where $R$ is the logarithmic error term. Disregarding this error term, we therefore end up with a control
$$ | X^{N, 1}_t -  \tilde X^{N, 1}_t  | \le R  \sqrt{ \frac{|A_t^N - \tilde A_t^N |}{N} } + \mbox{ other terms. }$$ 
Obviously, the presence of the square root on the rhs of the above inequality makes it impossible to use Gronwall's lemma and to take expectations -- this is however what one needs to obtain a precise control of the error within this approximation. 
We invite the interested reader to \cite{kurtz78}, where clever tricks are applied that overcome the above difficulty in the $1/N-$scaling corresponding to the law of large numbers and the study of the associated fluctuations. These tricks do however not apply in the present $1/\sqrt{N}-$scaling, see Section \ref{sec:appendixkurtz} below for more precise arguments. Another problem of this naive approach is that necessarily $B$ and the Poisson random measures $(\pi^i)_{i \geq 1}  $ are dependent -- while we wish to construct the limit system such that Brownian motion and Poisson random measures are defined with respect to the same filtration -- and hence necessarily independent (see e.g. Theorem II.6.3 of \cite{ikeda_stochastic_1989}). 

In what follows we show how to improve the above approach such that we will be able to avoid to use the modulus of continuity of Brownian motion and to produce a Brownian motion that is independent of the Poisson random measures.

\subsubsection{Freezing positions: Time-discretization and a piecewise KMT coupling}
To overcome the above difficulty, we freeze the coefficients over short time intervals such that the martingale $S^N$ of small jumps with frozen coefficients is indeed equal to a centered compound Poisson process and such that we can use a suitable coupling on each such time interval without being bothered by time changes. 
More precisely we decompose time into small intervals of length $ \delta $ where $\delta = \delta (N)  < 1$ has to be chosen in a convenient way. We shall use the KMT coupling on each such interval and then concatenate all these couplings pretty much in the same spirit as in \cite{adrien}. 

More precisely, we decompose for any $ k \geq 0$ the evolution of a given, say the first, neuron according to 
\begin{eqnarray}\label{eq:Euler1}
\xnone_{(k+1) \delta } &=& \xnone_{k\delta} - \alpha \int_{k \delta}^{(k+1)\delta} \xnone_{ s }ds 
- \int_{(k\delta, (k+1) \delta]  \times \R_+ }  \xnone_{s- } \indiq_{\{ z \le f ( \xnone_{ s- }) \}}\bar \pi^1 (ds, dz) \\
&&+ \frac{1}{\sqrt{N}}\sum_{j=1}^N  \int_{ (k\delta, (k+1) \delta]\times \R_+ \times \R  }  u    \indiq_{\{ z \le f ( X^{N, j }_{k\delta}) \}}  \pi^j ( ds, dz, du) + R^k_\delta , \nonumber
\end{eqnarray}
where $R^k_\delta$ is an error term.

If we condition on the values of the process at time $ k \delta  $  in \eqref{eq:Euler1}, say $ (x^1, \ldots, x^N) ,$ then the centered process of small jumps 
\begin{equation}\label{eq:centeredjumps}
t \mapsto \sum_{j=1}^N  \int_{ (k\delta, t ] \times \R_+ \times \R  }  u    \indiq_{\{ z \le f ( X^{N, j }_{k \delta}) \}}  \pi^j ( ds, dz, du) ,  k \delta < t \le  (k+1) \delta  ,
\end{equation}
now has constant 
jump rate given by $ \bar f (x) = \sum_{j=1}^N f ( x^j ) . $ The scaling property of Brownian motion  enables us to directly couple this process of small jumps, using the KMT construction, to a centered Brownian motion having variance $ \bar f (x),  $ that is, to  
$$ \sqrt{  \bar f(x) }  W^{N, k }_{t} ,$$ where $ W^{N, k } $ is a one dimensional standard Brownian motion,  independent of $ {\mathcal F}_{ k \delta } ,$ the past before time $ k \delta ,$ and of the projected Poisson random measures $ (\bar \pi^i )_{ i \geq 1}.$

An important point of our argument is that we use the time discretization to decouple the acceptance/rejection procedure -- which is represented by the terms $\indiq_{\{ z \le f ( X^{N, j }_{k \delta}) \}} $ in \eqref{eq:centeredjumps}, depending only on the frozen position of the neurons at time $ k \delta $ and performed using the projected measures $ \bar \pi^j , 1 \le j \le N $ -- from the random jump heights $ U$ representing the third coordinates of the atoms of  $\pi^j , 1 \le j \le N.$ We construct the Brownian motion $ W^{N, k }$ using only these centered random variables $U .$  In this way, $ W^{N, k } $ is indeed independent of the $ (\bar \pi^i )_{ i \geq 1}$ (but not of the original $ (\pi^i)_{i \geq 1}$).  

We perform such a coupling on each time interval $ (k \delta, (k+1 ) \delta ] $ and obtain independent Brownian motions $ W^{N, k } $ for each of them, together with error terms $K^{[k]} $ for each such interval. We then glue all these pieces of Brownian motion together to obtain a global Brownian motion $ W^N$ defined on $ \R_+$ which we use to construct the limit system \eqref{eq:dynlimit1}, driven by this particular Brownian motion $ W^N . $ 

Notice that in this way, the coupling is tailored to work for fixed size $N;$ changing the size of the system would lead to the construction of a different Brownian motion and hence of a different coupling.

The total error that contributes to the distance between $ \xnone_t$ and $ \bar X_t^1 $ is then given as the consequence of the following contributions. 
\begin{itemize}
\item 
The contribution of the error terms $ R_\delta^{  k }  ,  k \le t/ \delta , $ in \eqref{eq:Euler1}. Writing $ \tau ( s) = k \delta $ if $ k \delta \le s < (k+1) \delta, k \geq 0,$ this contribution is of order $ C_t \delta^{1/4},$ due to the fact that 
$$ \E \left( \left|  \frac{1}{\sqrt{N}} \sum_{j=1}^N  \int_{[0, t ] \times \R_+ \times \R  }  u   (  \indiq_{\{ z \le f ( X^{N, j }_{ s-}) \}}  - \indiq_{\{ z \le f ( X^{N, j }_{\tau ( s-)}) \}})  \pi^j ( ds, dz, du) \right| \right)  \le C_t \delta^{1/4} ,$$
which follows from  the $L^2-$isometry for stochastic integrals with respect to compensated Poisson random measures and the fact that $ \E | X^{N, j}_t   -  X^{N, j }_{\tau ( t) } | \le C_t \sqrt{\delta}$ (see Appendix). 
\item
The contribution coming from all error terms $ K^{[k]}, $ due to the coupling attempts on each time interval $ (k \delta, (k+1) \delta ].$ We upper bound this contribution by the sum of all contributions (we have $[t/\delta]$ such contributions) which is given by 
$$  C_t \frac{\ln   N  }{\sqrt{N}} [\frac{t}{ \delta}] .$$
\item
The contribution coming from the fact that we approximate the limit system \eqref{eq:dynlimit1} by its mean field version where the stochastic volatility $ \E ( f( \bar X^i_t) | {\W_t  }) $ is replaced by its mean-field version $ \frac1N \sum_{i=1}^N f ( \bar X^{i }_t) .$ This is a variance term of order $ C_t / \sqrt{N}. $ 
\end{itemize}
Choosing $ \delta = \delta (N) $ such that all these error terms are balanced, we end up with a strong error of the form $ C_t   (\ln N)^{1/5} N^{ - 1/10},$ and this is the content of our main Theorem \ref{strongconvergence}  stated in Section \ref{sec:2} below.

\subsection{A reader's guide through this article}
Most of the technical proofs of our paper rely on the same ideas. Therefore we have given the explicit details of one of these proofs, the proof of Theorem \ref{th:auxiliary}
in Subsection \ref{sec:35}, and the proof of Proposition \ref{prop:2} given in Section \ref{sec:41}. In most of the other proofs, these details are then skipped.

\section{Main results}\label{sec:2}
In the sequel, $ \nu $ and $ \nu_0$  denote probability measures on $(\r,\mathcal{B}(\r))$ satisfying 
\begin{ass}\label{ass:1}
\label{control}
$\int_\r xd\nu(x)=0,$ $\int_\r x^2d\nu(x)= 1,$ and $\int_\r x^2d\nu_0(x)<+\infty.$
\end{ass}
Recall that we consider a family of i.i.d. Poisson measures $(\pi^i(ds, dz, du))_{i \geq 1 }$ on $\r_+  \times \r_+ \times \r $ having intensity measure $ds dz \nu (du)$ and an i.i.d. family $(X^{i}_0)_{i \geq 1 }$ of $\r $-valued random variables independent of the Poisson measures, distributed according to  $ \nu_0 .$ In what follows we  use the associated canonical filtration  
\begin{equation}\label{eq:filtration}
 {\mathcal F}_t = \sigma \{  \pi^i  ( [0, s ] \times A ), s \le  t , A \in {\mathcal B} ( \R_+ \times \R ) , i \geq 1 \} \vee \sigma \{ X^{ i }_0, i \geq 1 \}, \; t \geq 0.
\end{equation}  
Recall also that the projected Poisson random measures are defined by 
\begin{equation}\label{eq:projpi}
 \bar \pi^i (ds, dz) = \pi^i ( ds, dz, \R) , 
\end{equation} 
having intensity $ds  dz .$ For any $ N \in\N, $ the finite system $ (X^{N, i}_{t}) ,  t\geq 0, \;  1 \le i \le N , $ is given by
\begin{eqnarray}
\label{eq:dynbis}
X^{N, i}_t &= & X^{i}_0  - \alpha  \int_0^t  X^{N, i}_s   ds -  \int_{[0,t]\times\r_+ } X^{N, i}_{s-}  \indiq_{ \{ z \le  f ( X^{N, i}_{s-}) \}} \bar \pi^i (ds,dz) \\
&&+ \frac{1}{\sqrt{N}}\sum_{ j \neq i } \int_{[0,t]\times\r_+\times\r}u \indiq_{ \{ z \le  f ( X^{N, j}_{s-}) \}} \pi^j (ds,dz,du) ,\nonumber
\end{eqnarray} 
and its associated limit system  $\left(\limY_t^i\right), t \geq 0,  {i\geq 1},$ by 
\begin{equation}
\label{eq:dynlimit1bis}
\limY^i_t =  X^i_0 -  \int_0^t  \alpha \limY^i_s  ds -\int_{[0,t]\times\r_+ }  \limY^i_{s- } \indiq_{ \{ z \le  f ( \limY^i_{s-}) \}} \bar \pi^i  (ds,dz) 
 +  \int_0^t \sqrt{\espc{f\left(\limY^i_s\right)}{\W_s  }} d W_s.
\end{equation}
In the above equation, $(W_s)_{s\geq 0}$ is a standard one-dimensional Brownian motion which is independent of the Poisson random measures $ (\bar \pi^i (ds, dz))_{ i \geq 1 }  $ 
and of $  X^i_0  , i \geq 1.$ Moreover, $ \W_s = \sigma \{ W_t , t \le s \} .$ To grant existence and the well-posedness of the above systems, we impose the following assumption.
\begin{ass}\label{ass:2}
1. $f$ is bounded and lower bounded such that $ \inf f > 0 .$ \\
2. Moreover, $f\in C_b^1(\r,\r_+)$ with
\begin{equation}\label{eq:eps}
|f'(x)|\leq \frac{C}{(1+|x|)^{1+\epsilon}}
\end{equation}
for all $x\in\r,$ where $C$ and $\epsilon$ are some positive constants. 
\end{ass}   
Notice that this assumption implies in particular that $f$ is Lipschitz. Under Assumption \ref{ass:2}, both systems \eqref{eq:dyn} and \eqref{eq:dynlimit1} are well-posed and admit a unique strong solution (Theorem~IV.9.1 of \cite{ikeda_stochastic_1989} for the finite neuron system and Theorem 2.6 of \cite{ELL_2020} for the limit system). 

Our result of strong convergence is stated in terms of a convenient distance that has already been introduced in \cite{ELL_2020}. This distance is used to deal with the big jump terms appearing both in the finite and in the limit system. 
More precisely, under Assumption \ref{ass:2} we may introduce the function 
$$ a (x) = \int_{-\infty}^x \frac{dy}{(1+\psi(y))^{1+\epsilon}},$$
where $\psi$ is any smooth non-negative function satisfying $\psi(y)=|y|$ for $|y|\geq 1, $ and where $ \epsilon$ is the bound appearing in the control on $ f' $ in Assumption \ref{ass:2}. The function $a (\cdot) $  belongs to  $ C^3_b(\R , \R_+ )$ and is strictly increasing. 
By construction, we have that  $ |f( x) - f (y ) | \le C | a ( x) - a(y) |, $ since $ |f' ( t) |  \le C | a' (t) | $ for all $ t \in \R.$ Moreover, one easily verifies that 
$$ |a''' (t) | + |a'' ( t) | + | t a '' (t) | \le C | a' (t) | , $$ 
for all $t \in \R, $ such that  for all $ x, y \in \R,$ 
\begin{equation}\label{eq:a}
 |a'' ( x) - a'' (y) | +  |a'(x) - a' (y) | + |x a'(x) - ya'(y) | + |f(x)-f(y)| \le C | a(x)-a(y) |,
\end{equation} 
where $C$ is some fixed constant.

To be able to perform our coupling, in addition to Assumption \ref{ass:2}, we also need to impose the following assumption on the jump size distribution.

\begin{ass}\label{ass:exp}
We assume additionally that $ \int_\R e^{ a x } \nu (dx) < \infty $ for all $ |a| \le a_0 $ for some $ a_0 >0.$ 
\end{ass}

In what follows, $ (\Omega, {\mathcal A}, {\mathbf P}) $ denotes a probability space where are defined the Poisson random measures $ (\pi^i )_{ i \geq 1}$ and the  initial positions $X^{i}_0, i \geq 1.$ 

\begin{thm}
\label{strongconvergence}
Grant Assumptions~\ref{control}, \ref{ass:2} and \ref{ass:exp} and fix $N\in\n^*.$ Let $a:\r\to\R_+$ satisfy \eqref {eq:a}.  Then it is possible to construct on an extension of $ (\Omega, {\mathcal A}, {\mathbf P}) $  a one-dimensional standard Brownian motion $W^N$ which is independent of  the initial positions $X^{i}_0, i \geq 1,$ and of $ (\bar \pi^i)_{i \geq 1} , i \geq 1 , $
such that  the following holds. Denoting $ (\bar X^{N, i })_{ i \geq 1 } $ the strong solution of \eqref{eq:dynlimit1} driven by $ W^N$ and by $ (\bar \pi^i )_{ i \geq 1},$ we have for every $t>0,$ and for any $ 1 \le j \le N, $ 
\begin{equation}
\label{strongbound}
 \E ( \sup_{ 0 \le s \le t } | a(X^{N,j}_s) -a(\limY^{N,j}_s) | ) \leq C_t  (\ln N)^{1/5} N^{ - 1/10} . 
\end{equation}
\end{thm}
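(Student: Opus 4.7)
The plan is to follow the three-step strategy outlined in the introduction: time-discretization with step $\delta=\delta(N)$, a piecewise KMT coupling on each slab $(k\delta,(k+1)\delta]$, and a final Gronwall argument on the increments of $a(X^{N,j})-a(\bar X^{N,j})$.

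First I would construct the Brownian motion $W^N$. On the interval $(k\delta,(k+1)\delta]$, condition on $\mathcal F_{k\delta}$ and freeze the coefficients so that the centered jump process from \eqref{eq:centeredjumps}, written only in terms of the $u$-marks of the $\pi^j$'s once the acceptance/rejection has been decided via the projections $\bar\pi^j$, is a compound Poisson process with constant intensity $\bar f(X^N_{k\delta})$ and jump law $\nu$. Assumption~\ref{ass:exp} gives $\nu$ exponential moments, so by the KMT corollary (Corollary 5.5, Chapter 7 of Ethier--Kurtz) I can build a standard Brownian motion $W^{N,k}$, independent of $\mathcal F_{k\delta}$ and of $(\bar\pi^i)_{i\ge 1}$ (the latter being essential because it only uses the third-coordinate marks), such that the compound Poisson process is within $C\ln(N\delta)\,E_k/\sqrt{\bar f(X^N_{k\delta})}$ in sup-norm of $\sqrt{\bar f(X^N_{k\delta})}W^{N,k}_{\cdot-k\delta}$ on that slab, with $E_k$ having uniform exponential moments. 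Concatenating the $W^{N,k}$'s using the Brownian scaling trick yields a single $W^N$ on $\mathbb R_+$ independent of $(\bar\pi^i)_i$ and of the $X^i_0$'s, which I then use to drive the limit system \eqref{eq:dynlimit1} on the enlarged space.

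Next I would expand $a(X^{N,1}_t)-a(\bar X^{N,1}_t)$ by Itô's formula using the key inequality \eqref{eq:a}. The drift part, the reset (big-jump) part, and the diffusive/small-jump part of the difference each produce a contribution that is either bounded by $C\int_0^t|a(X^{N,1}_s)-a(\bar X^{N,1}_s)|ds$ (Lipschitz in $a$, via \eqref{eq:a}) or by one of three explicit error sources. The three errors are: (i) the freezing error $R^k_\delta$ from \eqref{eq:Euler1}, whose expectation is bounded by $C_t\delta^{1/4}$ via the $L^2$-isometry for compensated Poisson integrals together with the a priori bound $\mathbb E|X^{N,j}_s-X^{N,j}_{\tau(s)}|\le C_t\sqrt\delta$ recalled in the appendix; (ii) the total KMT error $\sum_{k\le t/\delta}K^{[k]}$, of expected size $C_t(\ln N)\lceil t/\delta\rceil/\sqrt N$; and (iii) the mean-field fluctuation $\bigl|\frac1N\sum_{i=1}^N f(\bar X^i_s)-\mathbb E(f(\bar X^1_s)\mid\mathcal W_s)\bigr|^{1/2}$, of order $N^{-1/4}$ after taking square roots, which when plugged into the Brownian volatility and then squared via Itô contributes $C_t/\sqrt N$. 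An auxiliary exchangeable particle system $\widetilde X^{N,j}$ driven by the same $W^N$ but with volatility $\sqrt{\frac1N\bar f(\widetilde X^N)}$ is the natural intermediary between the finite system and the limit system, and bounds (i)--(iii) split the total error across the two comparisons $X^N\leftrightarrow\widetilde X^N$ and $\widetilde X^N\leftrightarrow\bar X^N$.

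Finally I would apply Gronwall. Since \eqref{eq:a} makes every term in the Itô expansion (including the non-Lipschitz reset $xf(x)$) controllable by $|a(x)-a(y)|$, one obtains
\[
\mathbb E\Bigl(\sup_{s\le t}|a(X^{N,1}_s)-a(\bar X^{N,1}_s)|\Bigr)
\le C_t\Bigl(\delta^{1/4}+\frac{\ln N}{\sqrt N\,\delta}+\frac{1}{\sqrt N}\Bigr)
e^{C_t},
\]
and the choice $\delta=(\ln N)^{4/5}N^{-2/5}$ balances the first two terms and yields the announced rate $C_t(\ln N)^{1/5}N^{-1/10}$; exchangeability extends the result from $j=1$ to any $j$. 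The main obstacle I anticipate is step one: ensuring that the piecewise KMT Brownian motions can be concatenated into a single $W^N$ that remains independent of all $(\bar\pi^i)_{i\ge 1}$, not just of the $\bar\pi^i$'s on the current slab. This is precisely why the acceptance marks (governed by $\bar\pi^i$) must be decoupled from the height marks $u$, and why the coupling is performed on the centered sum indexed only by the $u$-variables conditionally on $\mathcal F_{k\delta}\vee\sigma(\bar\pi^i,i\ge 1)$; verifying this conditional KMT construction rigorously, and controlling the logarithmic error $\ln(N\delta)$ uniformly in $k$, is the delicate part. The remaining estimates, especially the a priori moment bounds and the mean-field fluctuation (iii), follow the exchangeability and CLT-type computations already developed in \cite{ELL_2020}.
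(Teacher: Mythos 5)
Your proposal is correct and follows the paper's own route: time-discretization at scale $\delta(N)$, a slab-wise KMT coupling of the jump-height random walk with a Brownian increment, an intermediate exchangeable mean-field system $\auxY^N$ driven by the concatenated $W^N$, and a final absorption argument, with the same balancing choice $\delta=(\ln N)^{4/5}N^{-2/5}$. Two small points that you gloss over but that the paper treats carefully: first, the KMT corollary couples the \emph{random walk} $S^k_n=\sum_{l\le n}U^k_l$ with a Brownian motion $B^k$, so what you obtain on a slab is $B^k_{N^k_\delta}$ evaluated at the Poisson-distributed counter $N^k_\delta$; to extract a clean $\mathcal N(0,\delta)$ increment independent of $\F_{k\delta}$ and of the $\bar\pi^i$'s one needs the scaling identity showing $\sqrt{\delta/N^k_\delta}\,B^k_{N^k_\delta}\sim\mathcal N(0,\delta)$ independent of $N^k_\delta$ (Proposition~\ref{prop:Wkt}), and this conversion introduces the extra Poisson-fluctuation error $\cE^k_\delta=\sqrt{N^k_\delta/\delta}-\sqrt{\bar f(X^N_{k\delta})}$ that must be controlled via a large-deviation bound for $N^k_\delta$; second, because the Burkholder--Davis--Gundy step produces a $\sqrt t\,u^N_t$ term on the right-hand side, a literal integral Gronwall does not apply, and the paper instead absorbs on a short interval $[0,t_*]$ with $C(t_*+\sqrt{t_*})\le 1/2$ and then iterates over $[kt_*,(k+1)t_*]$, which is where the factor $e^{C_t}$ (in your notation) really comes from. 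Neither is a gap in the strategy, only in the level of detail.
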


\begin{rem}
The above result is stated with respect to the distance $ a ( \cdot ) $ in order to be able to deal with the big jumps since 
$$ \int_{\R} | x \indiq_{\{ z \le f (x) \}} - y \indiq_{\{z \le f (y) \}} | dz $$
is (in general) not of order $ C | x - y | .$ 
Imposing the existence of exponential moments for $ \nu_0, $ it is also possible to use the famous $x \ln x $ extension of the Gronwall lemma (Osgood's lemma) and to obtain upper bounds directly for $ \E ( \sup_{ 0 \le s \le t } | X^{N,j}_s - \limY^{N,j}_s| ) .$ The prize to pay is that in this case the rate of convergence will be of order 
$$ N^{  - \frac12 e^{ - C_t}} ,$$ 
without explicit expression of the constant $C_t,$ yielding a rate of convergence which may be worse than $ N^{ - 1/10} $ for large $t$ whenever $C_t$ grows with $t$.  
\end{rem}

To prove \eqref{strongbound}, by exchangeability, it is sufficient to concentrate only on the first neuron, and the proof will be given for $j=1.$
Using Ito's formula,
\begin{eqnarray}\label{eq:tobecited}
a(\xnone_{t }) &=& a(\xnone_{0}) 
- \alpha \int_{0}^{t} a' (\xnone_{ s }) \xnone_{ s }  ds\\
&& 
+ \int_{[0, t ]   \times \R_+  } [ a(0) - a( \xnone_{s- })]  \indiq_{\{ z \le f ( \xnone_{ s-}) \}}\bar \pi^{1} (ds, dz)+ M_t^N ,\nonumber
\end{eqnarray}
where 
\begin{equation}\label{eq:stn}
M_t^N:=\sum_{j\neq 1}^N\int_{[0,t ]  \times \R_+ \times \R }(a(X^{N,1}_{s-}+u/\sqrt N)-a(X^{N,1}_{s-})) \indiq_{\{ z \le f ( X^{N, j }_{ s-}) \}} \pi^{ j } (ds, dz,du).
\end{equation}
This last term represents the contribution of small jumps (of order $1/\sqrt N$) to the dynamic of $X_t^{N,1}.$  
When $N\to\infty, $ by the central limit theorem, the small jumps create a Brownian motion.  The proof of Theorem \ref {strongconvergence} is based on the coupling of  $M_t^N$ with an Ito integral directed by this Brownian motion. Recall that we write $ \bar f( x) = \sum_{i=1}^N f ( x^i). $ We have the following representation. 

\begin{thm}\label{th:auxiliary}
Grant Assumptions~\ref{control}, \ref{ass:2} and \ref{ass:exp} and fix $N\in\n^*$ and a time horizon $t> 0.$ Then it is possible to construct on an extension of $ (\Omega, {\mathcal A}, {\mathbf P}) $  a one-dimensional standard Brownian motion $W^N$ which is independent of  the initial positions $X^{i}_0, i \geq 1,$ and of $ (\bar \pi^i)_{i \geq 1} , i \geq 1, $
such that for any $ 0  \le t_1 < t_2 < t , $
\begin{eqnarray}\label{eq:eulerpartcont}
&&M_{t_2}^N - M_{t_1}^N =  \int_{ t_1}^{t_2}a' ( \xnone_{  s }) \sqrt{ \frac1N  \bar f ( X^{N }_{  s }) } d W^{N  }_s + \frac12 \int_{t_1}^{t_2} a'' (\xnone_{  s }) \left(  \frac1N \bar f ( X^{N }_{  s }) \right) ds \nonumber  \\
&& \quad \quad   \quad \quad  \quad \quad  \quad \quad \quad \quad   \quad \quad  \quad \quad  \quad \quad  \quad \quad   \quad \quad  \quad \quad  \quad \quad \quad   \quad \quad  \quad \quad  \quad \quad   +\cR_{t_1, t_2}^{N}, 
\end{eqnarray}
where $\cR^N_{t_1, t_2}$ is an error term satisfying
$$\E |\cR_{t_1, t_2}^{ N  }| \leq C_t (|t_2- t_1| + \sqrt{|t_2 -t_1|})   (\ln N)^{1/5} N^{ - 1/10}.$$
\end{thm}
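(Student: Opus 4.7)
The plan is to carry out the piecewise KMT construction sketched in the introduction, with the time interval $[t_1, t_2]$ partitioned into blocks $(k\delta, (k+1)\delta]$ of length $\delta = \delta(N)$ to be optimized. Writing $\tau(s) = k\delta$ whenever $k\delta \leq s < (k+1)\delta$, I first apply a second-order Taylor expansion inside the integrand of $S^N_{t_2}-S^N_{t_1}$:
$$a(X^{N,1}_{s-}+u/\sqrt{N})-a(X^{N,1}_{s-}) = \frac{u}{\sqrt{N}}a'(X^{N,1}_{s-}) + \frac{u^2}{2N}a''(X^{N,1}_{s-}) + r(X^{N,1}_{s-}, u/\sqrt{N}),$$
with $|r(x,h)|\leq C|h|^3$ since $a\in C^3_b$. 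The cubic remainder contributes $O(t/\sqrt{N})$ in $L^1$ using $\int|u|^3\nu(du)<\infty$ from Assumption~\ref{ass:exp} together with $\bar f \le N\|f\|_\infty$. The second-order term splits into its predictable compensator, which equals $\tfrac12\int_{t_1}^{t_2}a''(X^{N,1}_s)\bar f(X^N_s)/N\, ds$ (using $\int u^2\nu(du)=1$), plus a compensated martingale of $L^2$-norm $O(N^{-1/2})$.

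For the principal first-order term $I_1=N^{-1/2}\sum_{j=1}^N\int a'(X^{N,1}_{s-})u\mathbf 1_{\{z\le f(X^{N,j}_{s-})\}}\pi^j$ (augmented at no cost with the $j=1$ contribution), I freeze both the indicator threshold and the prefactor $a'$ at time $\tau(s-)$. The freezing error is controlled by Itô isometry: the difference is a martingale whose quadratic variation involves $\sum_j\mathbb E|f(X^{N,j}_s)-f(X^{N,j}_{\tau(s)})|$ and $\mathbb E|a'(X^{N,1}_s)-a'(X^{N,1}_{\tau(s)})|^2$, each controlled via \eqref{eq:a} by $\mathbb E|a(X^{N,j}_s)-a(X^{N,j}_{\tau(s)})|\le C_t\sqrt\delta$ (cf.\ Appendix). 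This yields an $L^1$-bound $C_t\sqrt{|t_2-t_1|}\,\delta^{1/4}$. Once frozen, on each block the process
$$t\mapsto \sum_{j=1}^N\int_{(k\delta,t]\times\R_+\times\R}u\,\mathbf 1_{\{z\le f(X^{N,j}_{k\delta})\}}\pi^j(ds,dz,du),$$
is, conditionally on $\mathcal F_{k\delta}$, a centered compound Poisson process of rate $\bar f(X^N_{k\delta})$ and jump law $\nu$. The crucial decoupling point is that it is built from the marks $u$ of only those atoms selected by the rejection test against the projected measures $\bar\pi^j$: the resulting piecewise-built Brownian motion $W^{N,k}$ depends on the $u$-coordinates only, hence is independent of $(\bar\pi^i)_{i\ge1}$ and of the $X^i_0$. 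The KMT corollary (Cor.~5.5, Ch.~7 of Ethier--Kurtz), together with the Brownian scaling $\sqrt{\bar f(X^N_{k\delta})}W^{N,k}$, furnishes the pathwise bound
$$\sup_{k\delta\le t\le(k+1)\delta}\Bigl|\text{(frozen block martingale)} - \sqrt{\bar f(X^N_{k\delta})}\,W^{N,k}_{t-k\delta}\Bigr|\le E_k\ln(N\|f\|_\infty\delta\vee 2),$$
with $E_k$ having exponential moments uniform in $k,N$.

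I then concatenate the independent $W^{N,k}$ into a single Brownian motion $W^N$. After multiplying by $a'(X^{N,1}_{\tau(s)})$ and summing, the frozen first-order term equals $\int_{t_1}^{t_2}a'(X^{N,1}_{\tau(s)})\sqrt{\bar f(X^N_{\tau(s)})/N}\,dW^N_s$, and unfreezing through Itô isometry (using $|\sqrt a-\sqrt b|\le\sqrt{|a-b|}$ and \eqref{eq:a}) restores the integrand $a'(X^{N,1}_s)\sqrt{\bar f(X^N_s)/N}$ at the price of another $C_t\sqrt{|t_2-t_1|}\,\delta^{1/4}$. Aggregating the at most $\lceil(t_2-t_1)/\delta\rceil$ KMT errors contributes $C_t(|t_2-t_1|/\delta)\ln(N)/\sqrt N$. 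All terms gathered,
$$\mathbb E|\mathcal R^N_{t_1,t_2}|\le C_t\Bigl(\sqrt{|t_2-t_1|}\,\delta^{1/4}+|t_2-t_1|\delta^{1/4}+\tfrac{|t_2-t_1|\ln N}{\delta\sqrt N}+\tfrac{|t_2-t_1|}{\sqrt N}\Bigr),$$
and the choice $\delta=(\ln N)^{4/5}N^{-2/5}$ balances $\delta^{1/4}$ against $\ln N/(\delta\sqrt N)$, both equal to $(\ln N)^{1/5}N^{-1/10}$, yielding the stated estimate.

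The main obstacle, as anticipated in the introductory sketch, lies in simultaneously controlling three competing error sources while enforcing the independence of $W^N$ from $(\bar\pi^i)_{i\ge1}$: the freezing error on $f$ (whose $L^1$ bound only has Hölder-$1/2$ regularity in $\delta$, giving a mere $\delta^{1/4}$ after taking expectations of the $L^2$-isometry), the accumulation of $\sim 1/\delta$ logarithmic KMT errors, and the Itô-isometric freezing error inside the target Itô integral. The block construction is what allows KMT to be invoked at all (because the jump rate is frozen, hence truly constant on a block), and the $u$-only construction of each $W^{N,k}$ is what secures independence from the $\bar\pi^i$; both devices are delicate but essential to make the errors additive and optimizable.
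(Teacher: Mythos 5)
Your proposal is correct and follows essentially the same route as the paper: Taylor expansion of $a$ to third order, freezing of the jump rate and of $a'$ on each block $(k\delta,(k+1)\delta]$ with freezing error $O(\delta^{1/4})$ controlled through the $L^2$-isometry and Lemma \ref{lem:euler}, decoupling of the rejection noise $(\bar\pi^i)$ from the marks $u$ so that the piecewise-built Brownian motions depend only on the $u$-coordinates, random-walk KMT on each block, Brownian scaling and concatenation, and finally the balancing choice $\delta=(\ln N)^{4/5}N^{-2/5}$; the paper merely swaps the order of the Taylor and freezing steps. One small imprecision: your claimed pathwise bound
$\sup_{k\delta\le t\le(k+1)\delta}|\cdot-\sqrt{\bar f(X^N_{k\delta})}W^{N,k}_{t-k\delta}|\le E_k\ln(N\|f\|_\infty\delta\vee 2)$
is not quite what the random-walk KMT gives once you rescale $B^k_{N^k_\delta}$ into a ${\cal N}(0,\delta)$ increment $W^{N,k}_\delta$: there is an extra term proportional to $(\sqrt{N^k_\delta/\delta}-\sqrt{\bar f(X^N_{k\delta})})\,W^{N,k}_\delta$, coming from the fluctuation of the block jump count $N^k_\delta$ around its conditional mean $\bar f(X^N_{k\delta})\delta$, which is not pathwise logarithmic (the paper handles it separately as $\cE^k_\delta$ in Proposition \ref{prop:final}, showing $\E|\cE^k_\delta|\le C/\sqrt\delta$ and hence $\E|\cE^k_\delta W^{N,k}_\delta|\le C$). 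Since that extra contribution is $O(1)$ per block, i.e.\ dominated by the $\ln(N\delta)$ KMT term, your aggregate $L^1$ estimate and the final rate $(\ln N)^{1/5}N^{-1/10}$ are unaffected.
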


\section{Proof of Theorem~\ref{th:auxiliary}}
In what follows, we decompose time into small intervals of length $ \delta $ where $\delta = \delta (N) < 1$ will be chosen later. Our strategy is discrete, that is, we first approximate the continuous time process by its discrete time skeleton, sampled at time multiples of $ \delta .$ And then we use each such interval to couple the sum of the small jumps felt by a given neuron with the increment of the Ito integral in the limit process.

The results stated this section hold for any fixed $ \delta  < 1 ,$ up to Subsection \ref{sec:35}. It is only at the very end, within the proof of Theorem \ref{th:auxiliary}, that we have to choose $\delta = \delta( N) :=  (\ln N)^{4/5} N^{ - 2/5}.$

\subsection{Time-Discretization}
For $k\geq 0,$ $N\in\N^*,$ denote
\begin{equation}\label{eq:Stndelta}
M_{\delta}^{N,k}:=\sum_{j=1}^N\int_{(k\delta,(k+1)\delta]  \times \R_+ \times \R }(a(X^{N,1}_{k\delta}+u/\sqrt N)-a(X^{N,1}_{k\delta})) \indiq_{\{ z \le f ( X^{N, j }_{ k\delta}) \}} \pi^{ j } (ds, dz,du).
\end{equation}

The following proposition summarizes the error due to the time discretization and to the symmetrization (adding the term $j=1$ to the sum over $j\neq 1$) of $M_t^N.$
\begin{prop}\label{prop:discretis} For all $t\geq 0,$ $N\in\N^*$ and $ 0 < \delta < 1,$ 
$$\E| M_t^{N}- \sum_{k=0}^{[t/\delta]- 1}M_{\delta}^{N,k}|\leq C_t (t+\sqrt t)(\delta^{1/4}+1/\sqrt N) .$$
\end{prop}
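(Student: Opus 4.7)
The plan is to decompose the difference $S_t^N - \sum_{k=0}^{[t/\delta]-1} S_\delta^{N,k}$ into three parts and estimate each one using the martingale plus compensator decomposition with respect to $ds\,dz\,\nu(du)$. Denoting $\tau(s) = k\delta$ for $s \in [k\delta,(k+1)\delta)$, I would write
\begin{align*}
S_t^N - \sum_{k=0}^{[t/\delta]-1} S_\delta^{N,k} &= \underbrace{\sum_{j\neq 1}\!\!\int_{(0,[t/\delta]\delta]\times\R_+\times\R}\!\!\!\!\bigl[G(X^{N,1}_{s-},u)\mathbf{1}_{\{z\le f(X^{N,j}_{s-})\}}-G(X^{N,1}_{\tau(s-)},u)\mathbf{1}_{\{z\le f(X^{N,j}_{\tau(s-)})\}}\bigr]\pi^j}_{=:D_1}\\
&\quad + \underbrace{\sum_{j\neq 1}\!\!\int_{([t/\delta]\delta,t]\times\R_+\times\R}\!\!\!G(X^{N,1}_{s-},u)\mathbf{1}_{\{z\le f(X^{N,j}_{s-})\}}\pi^j}_{=:D_2}\ -\ \underbrace{\sum_{k}\!\!\int_{(k\delta,(k+1)\delta]\times\R_+\times\R}\!\!\!G(X^{N,1}_{k\delta},u)\mathbf{1}_{\{z\le f(X^{N,1}_{k\delta})\}}\pi^1}_{=:D_3},
\end{align*}
where $G(x,u) = a(x+u/\sqrt N) - a(x)$. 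The three terms isolate respectively the time-freezing of coefficients, the leftover fractional interval, and the symmetrization cost of adding the $j=1$ index.

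The key analytic inputs, all consequences of $a \in C^3_b$ and $\int u\,d\nu = 0$, are: (a) $|G(x,u)| \le C|u|/\sqrt N$; (b) $|G(x_1,u)-G(x_2,u)| \le C|u||x_1-x_2|/\sqrt N$ (from the mean-value theorem applied to $a'$); (c) $|\int G(x,u)\nu(du)| \le C/N$ (Taylor to second order plus centering); and (d) $|\int[G(x_1,u)-G(x_2,u)]\nu(du)| \le C|x_1-x_2|/N$ (same plus Lipschitz of $a''$). I would combine these with $\E|X^{N,j}_s - X^{N,j}_{\tau(s)}| \le C\sqrt\delta$ (the $L^1$ modulus of continuity quoted in the Appendix) and $|f(x)-f(y)| \le C|x-y|$.

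For $D_3$, the compensator contributes $\le C[t/\delta]\cdot\delta\cdot\|f\|_\infty/N \le Ct/N$ via (c), while by Cauchy-Schwarz and the isometry the martingale part is $\le (\E M^2)^{1/2} \le C\sqrt{t/N}$ via (a); total $\le C(t+\sqrt t)/\sqrt N$. For $D_2$, the same computation over an interval of length $\le\delta$ gives $\le C(\delta + \sqrt\delta) \le C\delta^{1/4}$ (since $\delta<1$). For $D_1$, I would further split into the $a$-freezing part (using (b) and (d)) and the $f$-freezing part (using (a) and the identity $\int_{\R_+}|\mathbf{1}_{z\le f(x_1)}-\mathbf{1}_{z\le f(x_2)}|dz = |f(x_1)-f(x_2)|$). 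The compensator parts of both are bounded by $Ct\sqrt\delta$ using the $L^1$ continuity of $X^{N,\cdot}$; the martingale parts have $L^2$ norms controlled by an integral of the form $\int_0^t\E|X^{N,1}_s-X^{N,1}_{\tau(s)}|ds \le Ct\sqrt\delta$, so Cauchy-Schwarz yields $\E|\tilde D_1^{\mathrm{mart}}| \le C\sqrt{t\sqrt\delta} = C\sqrt t\,\delta^{1/4}$. Summing gives $\E|D_1| \le C(t+\sqrt t)\delta^{1/4}$.

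The only subtle point — and the source of the $\delta^{1/4}$ rate — is that when passing from the $L^1$ time-regularity $\E|X^{N,j}_s - X^{N,j}_{\tau(s)}| \le C\sqrt\delta$ into a bound on a martingale $L^1$-norm, one must go through its $L^2$-norm, i.e. through the quadratic variation, which inherits the $\sqrt\delta$ only linearly; taking the square root to return to $L^1$ costs the extra one-half exponent. I do not see a way to recover $\delta^{1/2}$ without an $L^2$ time-regularity bound on $X^{N,j}$, which is not invoked here.
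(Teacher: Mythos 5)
Your proof is correct and recovers the paper's bound, but the decomposition is organized differently. The paper first splits off the symmetrization error (the $j=1$ term, bounded directly in $L^1$ by $Ct/\sqrt N$), then Taylor-expands the discretization error $\bar R_t^{N,\delta}$ by order in $u/\sqrt N$, giving three pieces $\bar R^{N,i,\delta}$, $i=1,2,3$; for the first-order piece it applies Cauchy--Schwarz and the $L^2$-isometry exactly as you do, exploiting the identity $|\indiq_{\{z\le f(x_1)\}}-\indiq_{\{z\le f(x_2)\}}|^2 = |\indiq_{\{z\le f(x_1)\}}-\indiq_{\{z\le f(x_2)\}}|$ together with the $L^1$ modulus $\E|X^{N,j}_s-X^{N,j}_{\tau(s)}|\le C_t\sqrt\delta$, while the second- and third-order pieces get coarse $C_t\sqrt\delta$ and $Ct/\sqrt N$ bounds. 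Your version splits into $D_1$ (time-freezing), $D_2$ (fractional remainder), $D_3$ (symmetrization) and then does a compensator/martingale split of each; Taylor enters through the inputs (a)--(d) rather than as the outer decomposition. The two routes use the same engines and give the same rate. You are actually a bit more careful than the paper on the leftover interval $D_2\subset(\,[t/\delta]\delta,t\,]$, which the paper tacitly absorbs into $\bar R_t^{N,\delta}$ over $[0,t]$ without addressing the mismatch with $\sum_{k\le[t/\delta]-1}$; your $C\sqrt\delta$ for it is of the right order. Your diagnosis that the $\delta^{1/4}$ rate comes from converting $L^1$ time-regularity into an $L^2$ quadratic-variation bound matches the paper's bottleneck \eqref{eq:rn1} exactly.

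One small slip to flag: input (d) is stated slightly too strongly. Since $a\in C^3_b$ gives $a'''$ bounded but not Lipschitz, the Lagrange remainder at second order only yields $\bigl|\int_\R[G(x_1,u)-G(x_2,u)]\,\nu(du)\bigr|\le C N^{-1}\bigl(|x_1-x_2|+N^{-1/2}\bigr)$, because the intermediate points differ by at most $|x_1-x_2|+|u|/\sqrt N$. The extra $CN^{-3/2}$ term contributes an additional $Ct/\sqrt N$ after summing over $j$ and integrating in time, so your final estimate is unaffected, but (d) should be stated with this correction.
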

The proof of Proposition \ref{prop:discretis} is given in Section \ref{sec:auxiliary}.

\subsection{Representation by means of a compound Poisson process}
In what follows our goal is to couple each $ M_\delta^{N, k } $ with a Gaussian random variable. 
To construct this coupling we consider the family $(\bar \pi^{j } ( ds, dz ))_{j  \geq 1 } $ of i.i.d. Poisson random measures on $ \R_+ \times \R_+ , $ having Lebesgue intensity, independent of the initial positions $X^{i}_0, i \geq 1,$ introduced in \eqref{eq:projpi} above. These measures are used to represent the acceptance and/or rejection of jumps of particles. 

Introduce
\begin{equation}\label{eq: Nkdelta}
 N_\delta^k =  \sum_{i=1}^N \int_{(k\delta, k \delta +\delta ]  \times \R_+  } \indiq_{\{ z \le f ( X^{N, i }_{ k \delta}) \}} \bar \pi^{ i } (ds, dz) .  
 \end{equation}
We have the following alternative representation of $ M_\delta^{N,  k}.$ 
\begin{prop}\label{prop:U}
There exists an i.i.d. sequence $  (U^{k }_l)_{ k \geq 0,   l \geq 1 } $ of random variables distributed according to $ \nu , $ which is independent of the initial positions $X^{i}_0, i \geq 1,$ and of the Poisson random measures $ \bar \pi^{  j } , j\geq 1 ,$ such that 
for all $ k , $ almost surely, 
\begin{equation}\label{eq:decoupling}
 M_{\delta}^{N,k}=\sum_{l=1}^{N_\delta^k }  [ a( \xnone_{k\delta} + U_l^k / \sqrt{N}) - a( \xnone_{k\delta}) ] .
\end{equation}
Moreover, for fixed $k\geq 0,$ $  (U^{k }_l)_{   l \geq 1 } $ are independent of $\F_{k\delta},$ where $\F_{k\delta}$ has been introduced in \eqref{eq:filtration} above.
\end{prop}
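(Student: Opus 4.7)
The idea is to exploit the marked Poisson structure of $\pi^j$ in order to separate the acceptance mechanism, which involves only the $(s,z)$-coordinates, from the jump heights, which involve only the $u$-coordinate. I would first apply the marking theorem to write $\pi^j = \sum_i \delta_{(T_i^j, Z_i^j, \xi_i^j)}$, where $\sum_i \delta_{(T_i^j, Z_i^j)} = \bar\pi^j$ and where $(\xi_i^j)_{i,j \geq 1}$ is an i.i.d.\ $\nu$-distributed family, independent of $(\bar\pi^j)_{j \geq 1}$ and of the initial data $(X_0^i)_{i \geq 1}$. Plugging this representation into \eqref{eq:Stndelta} turns $S_\delta^{N,k}$ into the finite sum
\begin{equation*}
S_\delta^{N,k} = \sum_{(j,i) \in \mathcal I_k}\bigl[a(\xnone_{k\delta} + \xi_i^j/\sqrt N) - a(\xnone_{k\delta})\bigr], \quad \mathcal I_k := \bigl\{(j,i) :\, 1 \le j \le N,\, T_i^j \in (k\delta,(k+1)\delta],\, Z_i^j \le f(X^{N,j}_{k\delta})\bigr\}.
\end{equation*}
By \eqref{eq: Nkdelta}, one has $|\mathcal I_k| = N_\delta^k$.

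The sequence $(U_l^k)$ is then constructed as follows. Order $\mathcal I_k$ lexicographically in $(T_i^j, j)$ as $(j_1, i_1), \ldots, (j_{N_\delta^k}, i_{N_\delta^k})$, set $U_l^k := \xi_{i_l}^{j_l}$ for $1 \le l \le N_\delta^k$, and complete the sequence for $l > N_\delta^k$ with auxiliary i.i.d.\ $\nu$-distributed variables drawn on an enlargement of $(\Omega, \mathcal A, \mathbf P)$, independent of everything already defined. Identity \eqref{eq:decoupling} then follows immediately by resumming the above expression for $S_\delta^{N,k}$.

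It remains to identify the joint law of the full family $(U_l^k)_{k \ge 0, l \ge 1}$. For fixed $k$, let $\mathcal G_k$ be the $\sigma$-field generated by $\F_{k\delta}$ together with the restrictions of $\bar\pi^1, \ldots, \bar\pi^N$ to $(k\delta, (k+1)\delta] \times \R_+$. Conditionally on $\mathcal G_k$, the set $\mathcal I_k$ and its enumeration are deterministic, while the marks $(\xi_i^j)$ corresponding to atoms $T_i^j \in (k\delta, (k+1)\delta]$ remain i.i.d.\ $\nu$ and independent of $\mathcal G_k$, by the marking theorem combined with the independence of $\pi^j$ on disjoint time strips. Hence $(\xi_{i_l}^{j_l})_{1 \le l \le N_\delta^k}$ is conditionally i.i.d.\ $\nu$, and since this conditional law does not depend on $\mathcal G_k$, the completed sequence $(U_l^k)_{l \ge 1}$ is i.i.d.\ $\nu$ and independent of $\mathcal G_k$; in particular it is independent of $\F_{k\delta}$ and of $(\bar\pi^j)_{j \ge 1}$. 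Independence across distinct values of $k$ is inherited from the independence of the Poisson measures on disjoint time strips. The only subtle point, which is precisely what the freezing of the intensities at time $k\delta$ is designed to handle, is that the selection rule defining $\mathcal I_k$ must use only the $(s,z)$-coordinates, so that the marks of the selected atoms remain i.i.d.\ $\nu$ irrespective of which atoms were selected.
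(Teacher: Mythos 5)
Your proof is correct and takes essentially the same route as the paper, which also exploits the decoupling of the acceptance mechanism (the $(s,z)$-coordinates carried by $\bar\pi^j$) from the jump heights (the $u$-marks), but phrases it by conditioning on $X^N_{k\delta}=x$ and viewing $\Pi^k(du)=\sum_{j=1}^N\pi^j\bigl((k\delta,(k+1)\delta]\times[0,f(x^j)],du\bigr)$ as a finite-intensity Poisson random measure on $\R$ whose atoms are i.i.d.\ $\nu$, rather than via the marking theorem. One minor imprecision: independence of $(U_l^k)_{l\ge 1}$ from the \emph{full} $(\bar\pi^j)_{j\ge1}$ does not literally follow ``in particular'' from independence of your $\mathcal G_k$, which only contains $\bar\pi^j$ up to time $(k+1)\delta$; you should replace $\mathcal G_k$ by $\sigma\bigl(\F_{k\delta},\,(\bar\pi^j)_{j\ge1}\bigr)$, which is still independent of the in-strip marks by the very same marking/disjoint-strips reasoning, so the argument goes through unchanged.
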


Notice that in \eqref{eq:decoupling} we have decoupled the noise of the acceptance/rejection scheme from the random synaptic weights. The proof of  Proposition \ref{prop:U} is given in Section \ref{sec:auxiliary}.

We continue further developing $M^{N, k }_\delta.$ We have the following representation.

\begin{prop}\label{prop:develSkdelta} 
\begin{equation}\label{eq:snkdelta}
M^{N, k }_\delta  = a' ( \xnone_{k\delta}  ) \frac{1}{\sqrt{N}}  \sum_{l=1}^{N_\delta^k }  U_l^k + \frac12 a'' ( \xnone_{k \delta}) \left( \frac1N \bar f( X^N_{k \delta} )\right)  \delta + R^{N, 1, k}_\delta + R^{N, 2, k }_\delta ,
\end{equation}
where 
\begin{equation}\label{eq:error2}
  R^{N,1, k}_\delta = \frac12 a'' ( \xnone_{k \delta}) \frac{1}{N} \left( \sum_{l=1}^{N_\delta^k} (U_l^k)^2 -  \bar  f( X_{k \delta} ) \delta \right)  
\end{equation}  
and  
\begin{equation}\label{eq:error3} 
R^{N, 2, k }_\delta = \frac{1}{6 N^{3/2}} \sum_{l=1}^{N_\delta^k}  a''' ( \xi (U_l^k)  )(U_l^k)^3    ,
\end{equation}  
with  $ \xi (U_l^k)  \in  [ \xnone_{k\delta} - | U_l^k|/\sqrt{N} , \xnone_{k\delta} + |U_l^k|/\sqrt{N}] .$ 
\end{prop}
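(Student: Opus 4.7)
The proof is essentially a third-order Taylor expansion of $a$ at the point $\xnone_{k\delta}$, applied summand by summand to the representation given in Proposition~\ref{prop:U}.

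First I would invoke Proposition~\ref{prop:U} to write
\[
S_{\delta}^{N,k}=\sum_{l=1}^{N_\delta^k}\left[a\!\left(\xnone_{k\delta}+\frac{U_l^k}{\sqrt N}\right)-a(\xnone_{k\delta})\right].
\]
Since $a\in C^3_b(\R,\R_+)$ by the construction of $a$ in Section~\ref{sec:2}, Taylor's formula with Lagrange remainder applied to each term yields, for every $l$,
\[
a\!\left(\xnone_{k\delta}+\frac{U_l^k}{\sqrt N}\right)-a(\xnone_{k\delta})
= a'(\xnone_{k\delta})\frac{U_l^k}{\sqrt N}+\frac12 a''(\xnone_{k\delta})\frac{(U_l^k)^2}{N}+\frac16 a'''(\xi(U_l^k))\frac{(U_l^k)^3}{N^{3/2}},
\]
with $\xi(U_l^k)$ lying between $\xnone_{k\delta}$ and $\xnone_{k\delta}+U_l^k/\sqrt N$, as specified in the statement. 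Summing this identity over $l=1,\ldots,N_\delta^k$ gives
\[
S^{N,k}_\delta= a'(\xnone_{k\delta})\frac{1}{\sqrt N}\sum_{l=1}^{N_\delta^k}U_l^k + \frac{1}{2N}a''(\xnone_{k\delta})\sum_{l=1}^{N_\delta^k}(U_l^k)^2 + \frac{1}{6N^{3/2}}\sum_{l=1}^{N_\delta^k} a'''(\xi(U_l^k))(U_l^k)^3.
\]
The last term is exactly $R^{N,2,k}_\delta$ as defined in \eqref{eq:error3}.

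It then remains to extract the deterministic mean of the quadratic term. I would add and subtract $\frac{1}{2}a''(\xnone_{k\delta})\frac{1}{N}\bar f(X^N_{k\delta})\delta$, rewriting
\[
\frac{1}{2N}a''(\xnone_{k\delta})\sum_{l=1}^{N_\delta^k}(U_l^k)^2
= \frac12 a''(\xnone_{k\delta})\left(\frac1N\bar f(X^N_{k\delta})\right)\delta + \frac12 a''(\xnone_{k\delta})\frac{1}{N}\left(\sum_{l=1}^{N_\delta^k}(U_l^k)^2-\bar f(X^N_{k\delta})\delta\right),
\]
which matches the deterministic drift term in \eqref{eq:snkdelta} together with the residual $R^{N,1,k}_\delta$ of \eqref{eq:error2}. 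The choice of this particular decomposition is motivated by the fact that, conditionally on $\F_{k\delta}$, one has $\E[N^k_\delta\mid\F_{k\delta}] = \bar f(X^N_{k\delta})\delta$ (since $N^k_\delta$ defined in \eqref{eq: Nkdelta} is an $\F_{k\delta}$-conditional Poisson variable with this parameter) and $\E[(U^k_l)^2]=1$ by Assumption~\ref{control}, so $R^{N,1,k}_\delta$ is indeed a centered residual — this is not needed for the present statement but explains the decomposition.

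The argument is purely algebraic once Proposition~\ref{prop:U} is in hand, so there is no genuine obstacle: the only care required is in checking that $a\in C^3_b$ justifies the pointwise Taylor expansion and that the $\xi(U^k_l)$ produced by Lagrange's theorem land in the asserted interval (automatic from the mean value form of the remainder). The work of estimating the error terms $R^{N,1,k}_\delta$ and $R^{N,2,k}_\delta$ belongs to a later step of the overall proof, not to this proposition.
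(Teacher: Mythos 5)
Your proof is correct and follows essentially the same route as the paper: a third-order Taylor expansion with Lagrange remainder of $a$ around $\xnone_{k\delta}$, summed over $l=1,\ldots,N_\delta^k$ using the representation from Proposition~\ref{prop:U}, followed by the add-and-subtract of $\tfrac12 a''(\xnone_{k\delta})\tfrac1N\bar f(X^N_{k\delta})\delta$ to isolate the two error terms. You merely spell out the final algebraic decomposition that the paper leaves implicit in ``gives the result.''
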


\begin{proof}
For any fixed $U $ we have that  
$$ a( \xnone_{k\delta} + U / \sqrt{N}) - a( \xnone_{k\delta}) = a' (\xnone_{k\delta} ) \frac{U}{\sqrt{N}} + \frac12 a'' ( \xnone_{k\delta}) \frac{U^2 }{N} + \frac16 \frac{ a''' (  \xi (U) ) U^3}{N^{3/2}},$$
where the intermediate value $ \xi (U) $ is taken such that  $ \xi (U)  \in  [  \xnone_{k\delta} - |U|/\sqrt{N},  \xnone_{k\delta} + |U|/\sqrt{N}] .$ Replacing $U$ with $U^k_l$ and summing over  $l=1,\ldots, N_{\delta}^k$ gives the result.
\end{proof}

The main observation is now that the term $ \sum_{l=1}^{N_\delta^k }  U_l^k$ arising in \eqref{eq:snkdelta} is equal to a time changed random walk $ S^k \circ N_\delta^k , $ where 
\begin{equation}\label{eq:snk}
 S_n^k = \sum_{l=1}^n U_l^k , n \geq 0.
\end{equation} 
It is precisely this random walk that we couple with a Brownian motion by making use of the famous KMT coupling introduced by  \cite{komlos_approximation_1976}.

\subsection{KMT couplings}
Let $S = (S_n)_{ n \geq 0 } $  be the random walk defined by $  S_n = U_1 + \ldots + U_n, $ with $ (U_n)_{n \geq 1 }$ i.i.d., $U_n  \sim \nu $ (where $\nu$ is as in Assumption \ref{control}, satisfying also Assumption \ref{ass:exp}). Let $B $ be a standard one-dimensional Brownian motion.
Following the terminology proposed by \cite{adrien}, and relying on  Corollary~7.5.5 of \cite{ethier_markov_2005} which summarizes \cite{komlos_approximation_1976}, we say that $ (S, B) $ is a {\it KMT coupling} if $S$ and $B$ are
 constructed on the same probability space,  such that 
\begin{equation}\label{eq:KMT}
 \sup_{n \geq 0} \frac{ |S_n - B_n|}{ \ln (n \vee 2) } \le E < \infty 
\end{equation} 
almost surely, where $E$ is a random variable having exponential moments.\footnote{The fact that $E$ admits exponential moments follows from the proof of Corollary 7.5.5 of \cite{ethier_markov_2005}.}

In what follows we use a classical coupling result. It is based on the fact that it is always possible to realize a probability law on a product space $ \Omega_1 \times \Omega_2 , $ where $ \Omega_1, \Omega_2 $ are Polish, by means of a measurable function $ G : \Omega_1 \times [0, 1] \to \Omega_2.$ More precisely, it suffices to first simulate a random variable $ X_1$ according to the first marginal law and then, independently of this, a uniform random variable $ V, $ uniformly distributed on $ [0, 1].$ The couple $ (X_1, G( X_1, V) ) $ then follows the prescribed joint distribution. Despite the fact that this result seems to be common folklore we did not find a reference within one of the classical text books. For the same reason, \cite{adrien} has summarized this result in his Lemma 3.12 from where we quote the result. Applying this result to the joint distribution of the random walk and the Brownian motion which is defined by the KMT coupling, we obtain the following
\begin{lem}\label{lem:KMT}
There exists a measurable function $ G :  \r^{ \N}  \times [0, 1] \to C ( \r_+ , \r) $ such that, if $ (S_n)_n$ is  a centered random walk with jump law $\nu, $ satisfying Assumptions  \ref{control} and \ref{ass:exp}, and if $ V$ is uniformly distributed on $ [0, 1 ], $ independent of $S,$ then $ (S, G(S, V) ) $ is a KMT coupling.
\end{lem}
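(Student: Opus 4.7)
My plan is to reduce the lemma to the existence of a joint KMT coupling (which is given) plus a standard disintegration/measurable-realization argument.

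First, I invoke the KMT theorem in the form quoted as Corollary~7.5.5 of \cite{ethier_markov_2005}: there exists some probability space on which we can construct a random walk $\tilde S = (\tilde S_n)_{n\geq 0}$ with jump law $\nu$ together with a standard Brownian motion $\tilde B$ satisfying \eqref{eq:KMT}. Let $\mu$ denote the joint law of $(\tilde S, \tilde B)$ on the product Polish space $\r^{\N} \times C(\r_+,\r)$, and note that the first marginal of $\mu$ is $\nu^{\otimes \N}$.

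Next, since both factors are Polish (for $\r^{\N}$ with the product topology, and $C(\r_+,\r)$ with the topology of uniform convergence on compacts), the joint law $\mu$ admits a regular conditional distribution: there exists a probability kernel $K : \r^{\N} \times \B(C(\r_+,\r)) \to [0,1]$ such that for any measurable $A,B$,
\begin{equation*}
\mu(A\times B) = \int_A K(s,B)\, \nu^{\otimes \N}(ds).
\end{equation*}
I would write this as the standard disintegration statement for laws on products of Polish spaces.

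Then I use the Borel isomorphism theorem: $C(\r_+,\r)$ is an uncountable Polish space, hence Borel-isomorphic to $[0,1]$; combining this with the classical fact that any Borel probability measure on $[0,1]$ is the pushforward of Lebesgue measure by a measurable map (its quantile function), and doing this in a way that depends measurably on a parameter, produces a jointly measurable map $G : \r^{\N}\times [0,1] \to C(\r_+,\r)$ such that for each $s$, the law of $G(s,V)$ under $V\sim\mathrm{Unif}[0,1]$ equals $K(s,\cdot)$. This parametric measurable realization is standard (see e.g.\ the Blackwell--Dubins/Kallenberg transfer-type results); the only nontrivial point is joint measurability, which follows by choosing a Borel isomorphism once and for all and then applying the scalar quantile construction fiberwise.

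Finally, I verify the conclusion: let $(S_n)_{n\geq 0}$ be a centered random walk with jump law $\nu$ and let $V\sim \mathrm{Unif}[0,1]$ be independent of $S$. Then $(S,G(S,V))$ has joint law $\mu$ by construction, and therefore inherits from $(\tilde S,\tilde B)$ the KMT property \eqref{eq:KMT}, in particular $G(S,V)$ is a standard Brownian motion and the supremum bound holds almost surely. The main obstacle in this plan is the jointly measurable realization of the kernel $K$ by a function $G$ defined on $\r^{\N}\times [0,1]$; once that measurable-selection step is granted, the rest is just unpacking the KMT input and checking marginals.
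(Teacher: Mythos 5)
The paper does not actually give a proof of Lemma~\ref{lem:KMT}; it states it as a classical fact and cites Lemma~3.12 of \cite{adrien}. Your blind proposal reproduces the standard proof of such a randomization statement and is essentially correct: obtain a KMT pair $(\tilde S,\tilde B)$ on some auxiliary space from Corollary~7.5.5 of \cite{ethier_markov_2005}, disintegrate its joint law $\mu$ on the Polish product $\r^{\N}\times C(\r_+,\r)$ by a regular conditional probability, and realize the resulting kernel by a jointly measurable map $G$ via a fixed Borel isomorphism $C(\r_+,\r)\simeq[0,1]$ followed by the fiberwise quantile transform. This is exactly the transfer (noise-outsourcing) theorem, e.g.\ Kallenberg's Theorem~6.10, so the ``measurable realization'' step you flag as the only nontrivial point is indeed a citable standard result and not a gap. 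One small inaccuracy worth correcting: the first marginal of $\mu$ on $\r^{\N}$ is the law of the partial-sum path $(\tilde S_n)_{n\geq 0}$, not $\nu^{\otimes\N}$ (which is the law of the i.i.d.\ \emph{increments}); since $G$ is applied to the path $S$ itself, the disintegration should be taken with respect to that path law. This does not affect the validity of the argument, because the construction works verbatim for whatever the first marginal of $\mu$ is, but the identification should be stated correctly.
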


We will use the above coupling on  each interval $ (k \delta, (k+1) \delta].$ To do so, we shall also use an i.i.d. family $(V^k)_{k\geq 0 }$ of uniform random variables, uniformly distributed on $ [0, 1 ], $ independent of anything else.

\begin{prop}\label{prop:KtN}
Let $N^k_{\delta}$ be given by \eqref{eq: Nkdelta}, $(U_l^k)_{ l \geq 1 }$ as in Proposition \ref{prop:U} and $ (S_n^k)_{ n \geq 0 } $ as in \eqref{eq:snk}. Then 
$$ B^k := G ( S^k, V^k ) $$
is a Brownian motion which is independent of $(\bar \pi^i), \  i\geq 1,$ and of $ {\mathcal F}_{k \delta}, $ and 
 \begin{equation*}
S^k_{ N_\delta^k } =  \sum_{l=1}^{N_\delta^k }  U_l^k =B^{k}_{N_\delta^k } +   K_\delta^{k}, 
\end{equation*} 
where the random variable $K_\delta^{k}$ satisfies
$$\esp{ | K_\delta^k |} \le C \ln ( N \|f\|_\infty \delta) .$$

\end{prop}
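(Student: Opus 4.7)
The plan is to apply Lemma~\ref{lem:KMT} directly with the random walk $S^k$ and the uniform $V^k,$ and then transport the almost-sure KMT control \eqref{eq:KMT} to the random time $N_\delta^k.$ First I would verify the hypotheses of Lemma~\ref{lem:KMT}: by Proposition~\ref{prop:U} the sequence $(U_l^k)_{l\geq 1}$ is i.i.d.\ with law $\nu,$ centered (by Assumption~\ref{control}) and enjoys exponential moments (by Assumption~\ref{ass:exp}), so $S^k$ is an admissible centered random walk. Since $V^k$ is uniform on $[0,1]$ and independent of $S^k,$ Lemma~\ref{lem:KMT} yields that $(S^k, B^k) = (S^k, G(S^k, V^k))$ is a KMT coupling: $B^k$ is a standard Brownian motion and there exists a random variable $E^k$ with exponential moments such that $|S^k_n - B^k_n|\le E^k \ln(n\vee 2)$ for all $n\geq 0.$

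The independence assertion comes from keeping careful track of what $B^k$ is built from. By construction $B^k$ is a deterministic measurable function of $(S^k, V^k),$ hence measurable with respect to $\sigma(U_l^k, l\geq 1;\, V^k).$ Proposition~\ref{prop:U} asserts that $(U_l^k)_{l\geq 1}$ is independent of $X_0^i, i\geq 1,$ of $(\bar\pi^i)_{i\geq 1},$ and of $\mathcal{F}_{k\delta};$ since $V^k$ was introduced independently of everything else, the same holds for $(U_l^k,V^k),$ and therefore for $B^k.$

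Setting $K_\delta^k := S^k_{N_\delta^k} - B^k_{N_\delta^k},$ the identity in the proposition is immediate from the KMT coupling evaluated at the integer-valued random time $N_\delta^k,$ and the pathwise bound \eqref{eq:KMT} gives
\begin{equation*}
|K_\delta^k| \leq E^k\, \ln(N_\delta^k \vee 2).
\end{equation*}
The key observation for taking expectations is that $E^k$ is $\sigma(U_l^k, l\geq 1;\, V^k)$-measurable while $N_\delta^k,$ defined in \eqref{eq: Nkdelta}, is measurable with respect to $\sigma(\mathcal{F}_{k\delta}, (\bar\pi^i)_{i\geq 1});$ by the independence from the previous paragraph, $E^k$ and $N_\delta^k$ are therefore independent.

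The finish is then a short computation. Using this independence, the uniform bound $\E[E^k]\leq C$ coming from the exponential moments of $E^k$ (which is the same constant for every $k$ since the law of $(S^k, V^k)$ does not depend on $k$), and the concavity of $\ln$ via Jensen's inequality, I would write
\begin{equation*}
\E|K_\delta^k| \leq \E[E^k]\, \E\bigl[\ln(N_\delta^k \vee 2)\bigr] \leq C\, \ln\bigl(\E[N_\delta^k] + 2\bigr),
\end{equation*}
and then conclude by the elementary bound $\E[N_\delta^k]\le N\|f\|_\infty\delta,$ which follows from the intensity of the Poisson random measures and the assumption that $f$ is bounded. The only mildly delicate point in the plan is the independence bookkeeping in the second step, since one must use precisely the decoupling between the projected measures $\bar\pi^i$ and the jump marks that was built into Proposition~\ref{prop:U}; everything else is a direct application of the stated KMT lemma followed by Jensen.
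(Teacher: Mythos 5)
Your proposal is correct and follows essentially the same route as the paper: apply Lemma~\ref{lem:KMT} to $(S^k,V^k)$, evaluate the KMT pathwise bound at the random time $N_\delta^k$, exploit independence of $E^k$ from $N_\delta^k$, then use Jensen and the bound $\E[N_\delta^k]\le N\|f\|_\infty\delta$. If anything, your independence bookkeeping is slightly more careful than the paper's wording: the paper says $K_\delta^k$ is independent of $\mathcal{F}_{k\delta}$ ``and in particular of $N_\delta^k$'', whereas $N_\delta^k$ is really $\sigma(\mathcal{F}_{k\delta},(\bar\pi^i)_{i\geq 1})$-measurable (it depends on the Poisson measures after time $k\delta$), which is exactly the sigma-field you identify.
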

\begin{proof}
By construction, the random walk $(S_n^k)_{ n \geq 0} $ is independent of $(\bar \pi^i), \  i\geq 1,$ and of $ {\mathcal F}_{k \delta}. $ 
According to Lemma \ref{lem:KMT},
$ B^k = G( S^k, V^k ) $ 
achieves the KMT-coupling of the random walk $S^k $ with its approximating Brownian motion $B^k .$ Moreover, by construction, $ B^k $ is also independent of $(\bar \pi^i), \  i\geq 1,$ and of $ {\mathcal F}_{k \delta} .$

In this way, we may rewrite the contribution of the small jumps as
\begin{equation}
S^k_{ N_\delta^k }= \sum_{l=1}^{N_\delta^k }  U_l^k  =   B^{k}_{N_\delta^k } +   K_\delta^{k}, \mbox{ with an error term }  K_\delta^k =  S^{k}_{ N_\delta^k } - B^{k}_{N_\delta^k } 
\end{equation} 
which satisfies 
$$ | K_\delta^k | \le  [\ln (N_\delta^k \vee 2)]  E_k ,$$
with $ E_k  = \sup_{ n \geq 0} \frac{ | S_n^k - B_n^k |}{ \ln ( n \vee 2 ) } $ the error bound of the KMT-coupling. This error term depends only on $S^k$ and $V^k$ and hence is independent of  
$ {\mathcal F}_{ k \delta}$ and in particular of $ N^k_\delta.$ 

Using this independence and Jensen's inequality, we conclude that  
$$ \esp{  | K_\delta^k |}  \le \esp{ \ln ( N_\delta^k \vee 2 ) } \esp{| E_k |} \le \ln \left( \esp{ N_\delta^k } + 2  \right) \esp{|E_k |} ,$$
implying the assertion, since $E_k$  has an exponential moment and 
\begin{equation}\label{eq:moyenne}
\E[N_{\delta}^k]=\E[
 \esp{N_\delta^k | {\mathcal F}_{k \delta} }] = \delta\E[\bar f ( X^N_{k \delta} ) ]\leq N\|f\|_{\infty}\delta.
\end{equation}
\end{proof}

In what follows we propose another representation of the time changed Brownian motion $ B^k_{N_\delta^k }.$

\begin{prop}\label{prop:Wkt}
Define 
\begin{equation}\label{eq: Wkt}
W^{N, k}_\delta :=  \sqrt{\delta/ N_\delta^k } B^k_{N_\delta^k }\, \indiq_{\{N^k_\delta\neq 0\}} + B^k_\delta \, \indiq_{\{N^k_\delta =  0\}}.
 \end{equation}
Then $W^{N, k}_\delta \sim {\cal N} (0, \delta) $ is independent of $(\bar \pi^{i}), i\geq 1, $ and of $ {\mathcal F}_{k \delta}.$ 
\end{prop}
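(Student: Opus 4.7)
The key observation is that by Proposition~\ref{prop:KtN}, the Brownian motion $B^k$ is independent of $\mathcal{G}:=\sigma(\bar\pi^i, i\geq 1)\vee\mathcal{F}_{k\delta}$, while the random variable $N^k_\delta$ defined in \eqref{eq: Nkdelta} is $\mathcal{G}$-measurable (it is constructed from the frozen positions $X^{N,i}_{k\delta}$, which are $\mathcal{F}_{k\delta}$-measurable, together with the projected Poisson measures $\bar\pi^i$). Consequently, $W^{N,k}_\delta$ is a measurable function of the pair $(N^k_\delta, B^k)$ where the first component is $\mathcal{G}$-measurable and the second is independent of $\mathcal{G}$. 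The strategy is then to condition on $\mathcal{G}$, exploit the Brownian scaling property to identify the conditional law, and read off both the marginal distribution and the independence from the fact that this conditional law is deterministic.

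More concretely, the plan is to test against bounded measurable $g:\R\to\R$ and bounded $\mathcal{G}$-measurable $H$, and compute
\[
\E[g(W^{N,k}_\delta)H]=\sum_{n\geq 0}\E\bigl[g(W^{N,k}_\delta)\,H\,\1_{\{N^k_\delta=n\}}\bigr].
\]
On the event $\{N^k_\delta=n\}$ with $n\geq 1$, one has $W^{N,k}_\delta=\sqrt{\delta/n}\,B^k_n$. Since $B^k$ is independent of $\mathcal{G}$, and in particular independent of $H\,\1_{\{N^k_\delta=n\}}$, we factor the expectation and use Brownian scaling to get $\sqrt{\delta/n}\,B^k_n\sim\mathcal{N}(0,\delta)$. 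On the event $\{N^k_\delta=0\}$ we have $W^{N,k}_\delta=B^k_\delta$, which is $\mathcal{N}(0,\delta)$ distributed and independent of $\mathcal{G}$ for the same reason. Denoting by $\Phi_\delta(g)=\int g(x)\,\phi_\delta(x)\,dx$ the $\mathcal{N}(0,\delta)$ integral of $g$, each term in the sum equals $\Phi_\delta(g)\,\E[H\,\1_{\{N^k_\delta=n\}}]$, and summing over $n$ yields
\[
\E[g(W^{N,k}_\delta)H]=\Phi_\delta(g)\,\E[H].
\]
Specializing $H\equiv 1$ identifies the law of $W^{N,k}_\delta$ as $\mathcal{N}(0,\delta)$, and the factorization for general $H$ expresses exactly the independence of $W^{N,k}_\delta$ from $\mathcal{G}$, hence from both $(\bar\pi^i)_{i\geq 1}$ and $\mathcal{F}_{k\delta}$.

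There is no real obstacle in this argument; the only subtle point is the combined handling of the two regimes $N^k_\delta=0$ and $N^k_\delta\geq 1$ in a single unified formula, which is precisely why the indicator splitting appears in the definition \eqref{eq: Wkt} of $W^{N,k}_\delta$ (the degenerate case $n=0$ would otherwise make the factor $\sqrt{\delta/n}$ meaningless, so one has to patch in $B^k_\delta$ on this event). The whole proof reduces to one line of Brownian scaling applied conditionally on the value of $N^k_\delta$, made rigorous by the independence statement already established in Proposition~\ref{prop:KtN}.
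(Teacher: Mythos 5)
Your proposal is correct and follows essentially the same route as the paper: condition on the value of $N^k_\delta$, use Brownian scaling together with the independence of $B^k$ from $\sigma(\bar\pi^i,i\geq 1)\vee\mathcal{F}_{k\delta}$ to identify the conditional law as $\mathcal{N}(0,\delta)$, and conclude both the marginal law and the independence from the fact that this conditional law is deterministic. The only cosmetic difference is that you test against a general bounded $\mathcal{G}$-measurable $H$ and sum over $n$, while the paper phrases the same computation as $\E[\phi(W^{N,k}_\delta)\mid N^k_\delta]=\E[\phi(B^k_\delta)]$; your version makes the reduction to conditioning on $N^k_\delta$ a little more explicit, but the argument is the same.
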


\begin{proof}
Since the Brownian motion $(B^k_t)_t$ is independent of $\bar\pi^i,\, i\geq 1$, and of $ {\mathcal F}_{k \delta},$ 
 we only have to prove that $W^{N, k}_\delta$ has the right law and is independent of $N^k_\delta.$ For that sake let $\phi$ be any bounded real valued Borel measurable test function. Since $ B^k $ is independent of $ N^k_\delta,$ we have 
 $$ \esp{\phi(W^{N, k}_\delta)|N^k_\delta} \indiq_{\{N^k_\delta\neq 0\}}  =\esp{\phi\left (\sqrt{\frac {\delta}{N^k_\delta}}\; B^k_{N_\delta^k}\right )|N^k_\delta } \indiq_{\{N^k_\delta\neq 0\}} =  F( N^k_\delta ) \indiq_{\{N^k_\delta\neq 0\}} , $$
 where we define for any $ n \geq 1, $
 $$F(n) = \esp{\phi\left (\sqrt{\frac {\delta}n}\; B^k_n\right )}.$$
 On the other hand, using the scaling property of the Brownian motion, for any $ n \geq 1, $ 
 $$ F(n) = \esp {\phi(B^k_\delta)}  ,$$
 which does not depend on $n.$ Therefore we may conclude that  
 \begin{equation*}
\esp{\phi(W^{N, k}_\delta)|N^k_\delta} \indiq_{\{N^k_\delta\neq 0\}} =\esp {\phi(B^k_\delta)} \indiq_{\{N^k_\delta\neq 0\}},
\end{equation*}
implying the assertion. 
\end{proof}

Noticing that conditionally on $\F_{k\delta}, $ $N_\delta^k$ is Poisson distributed with parameter  $\bar f ( X^N_{k \delta}) \delta,$ 
it is straightforward to obtain the following final representation of the small jumps.
\begin{prop}\label{prop:final}
$$ \frac{1}{\sqrt{N}}\sum_{l=1}^{N_\delta^k }  U_l^k   =  \sqrt{\frac1N  \bar f ( X^N_{k \delta})} \;  W^{N,k}_\delta  + R^{N,3,   k}_{\delta}, $$
where
$\E |R_{\delta}^{N, 3,  k}|\leq C\ln(N\delta)/\sqrt N.$

\end{prop}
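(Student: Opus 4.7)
The plan is to assemble Propositions \ref{prop:KtN} and \ref{prop:Wkt} into an explicit formula for $R^{N,3,k}_\delta$ and then control the resulting two contributions. The first step is an algebraic rewriting: combining Proposition \ref{prop:KtN} with the definition \eqref{eq: Wkt} of $W^{N,k}_\delta$, I would establish the identity
$$ \frac{1}{\sqrt{N}}\, B^k_{N_\delta^k} \;=\; \sqrt{\frac{N_\delta^k}{N\delta}}\, W^{N,k}_\delta, $$
valid on all of $\Omega$: on $\{N_\delta^k \neq 0\}$ this is just the rescaling prescribed by \eqref{eq: Wkt}, while on $\{N_\delta^k = 0\}$ both sides vanish (on the right the $B^k_\delta$-part of $W^{N,k}_\delta$ is annihilated by the prefactor $\sqrt{N_\delta^k/\delta}$). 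Substituting this into Proposition \ref{prop:KtN} gives the clean decomposition
$$ R^{N,3,k}_\delta \;=\; \left( \sqrt{\frac{N_\delta^k}{N\delta}} - \sqrt{\frac{\bar f(X^N_{k\delta})}{N}} \right) W^{N,k}_\delta \;+\; \frac{K_\delta^k}{\sqrt{N}}. $$

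The second summand is dispatched immediately by Proposition \ref{prop:KtN}, yielding $\E|K_\delta^k|/\sqrt{N} \leq C\ln(N\delta)/\sqrt{N}$. For the first summand I would use that, by the joint independence built into Propositions \ref{prop:U} and \ref{prop:Wkt}, $W^{N,k}_\delta$ is independent of the sigma-algebra $\sigma(\bar\pi^i,i\geq 1)\vee \F_{k\delta}$, which contains both $N_\delta^k$ and $\bar f(X^N_{k\delta})$. The expectation of the absolute value therefore factors into $\E|W^{N,k}_\delta|\cdot \E|\sqrt{N_\delta^k/(N\delta)}-\sqrt{\bar f(X^N_{k\delta})/N}|$. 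The Gaussian factor contributes $\sqrt{2\delta/\pi}\leq C\sqrt{\delta}$. For the remaining factor I would invoke $|\sqrt a-\sqrt b|\leq |a-b|/\sqrt b$ with $b = \bar f(X^N_{k\delta})/N \geq \inf f > 0$ (which is where the lower bound in Assumption \ref{ass:2} is crucial), obtaining the upper bound $|N_\delta^k-\bar f(X^N_{k\delta})\delta|/(N\delta\sqrt{\inf f})$. Cauchy--Schwarz together with the conditional variance identity $\mathrm{Var}(N_\delta^k\mid\F_{k\delta})=\bar f(X^N_{k\delta})\delta\leq N\|f\|_\infty\delta$ (a consequence of $N_\delta^k$ being conditionally Poisson) gives $\E|N_\delta^k-\bar f(X^N_{k\delta})\delta|\leq C\sqrt{N\delta}$. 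Multiplying through, the first summand is bounded in $L^1$ by $C/\sqrt{N}$, which is absorbed into the $C\ln(N\delta)/\sqrt{N}$ coming from the second.

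The points requiring care are the clean treatment of the degenerate event $\{N_\delta^k=0\}$ in the algebraic identity above (so that no actual division by $0$ takes place), and the fact that the factorisation of the expectation relies on joint, not merely marginal, independence of $W^{N,k}_\delta$ from $(\bar\pi^i)_{i\geq 1}$ and $\F_{k\delta}$. The crucial structural input here is exactly the decoupling emphasized in the construction: the KMT Brownian motion $B^k$ is built from the jump-height variables $U^k_l$, which by Proposition \ref{prop:U} are independent of the acceptance--rejection projections $(\bar\pi^i)_{i\geq 1}$ and of $\F_{k\delta}$. Without this decoupling the product could not be split, and even with it one must still use the lower bound $\inf f>0$ to avoid a $\sqrt{\delta}$ loss in the Poisson fluctuation estimate.
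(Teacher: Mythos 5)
Your proposal is correct and, at the level of architecture, it matches the paper's proof: write $\tfrac{1}{\sqrt N}\sum_l U_l^k = \tfrac{1}{\sqrt N}B^k_{N_\delta^k} + \tfrac{1}{\sqrt N}K_\delta^k$ using Proposition~\ref{prop:KtN}, rewrite $\tfrac{1}{\sqrt N}B^k_{N_\delta^k} = \sqrt{N_\delta^k/(N\delta)}\,W^{N,k}_\delta$ from~\eqref{eq: Wkt} (with both sides vanishing on $\{N_\delta^k=0\}$, exactly as you note), factor the expectation using the \emph{joint} independence of $W^{N,k}_\delta$ from $\sigma(\bar\pi^i,i\geq 1)\vee\F_{k\delta}$, bound $\E|W^{N,k}_\delta|\leq C\sqrt\delta$, and control the square-root difference via the conditional Poisson variance and Cauchy--Schwarz. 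You also correctly identify that the decoupling from Proposition~\ref{prop:U} (the $U^k_l$ are independent of the projected measures and of $\F_{k\delta}$) is what makes the factorisation of the expectation legitimate.

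The one genuine difference is a simplification on your side. The paper bounds the square-root difference using the Lipschitz continuity of $x\mapsto\sqrt x$ on $[N\underbar f/2,\infty)$, which requires \emph{both} arguments to sit above this level; since $N_\delta^k/\delta$ can be as small as $0$, the paper has to split on the event $G^k=\{N_\delta^k/\delta>N\underbar f/2\}$ and control $(G^k)^c$ by the Poisson large-deviation estimate \eqref{eq:largedev}, which in turn uses $N\delta(N)\to\infty$. You instead use the one-sided inequality $|\sqrt a-\sqrt b|\leq |a-b|/\sqrt b$, which holds for every $a\geq 0$ as soon as $b>0$; with $b=\bar f(X^N_{k\delta})/N\geq\inf f>0$ almost surely, this needs no event-splitting and no deviation estimate. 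The two approaches give the same final rate $C/\sqrt N$ for this term, but yours is shorter and removes a whole lemma from the argument. (For completeness: the paper's displayed intermediate claim that $\E|\cE_\delta^k|\leq C$ is, by the paper's own computation, really a $C/\sqrt\delta$ bound; both your version and the paper's recover $C/\sqrt N$ only after multiplying by the $\sqrt\delta$ from $\E|W^{N,k}_\delta|$, and you make this dependence visible from the start.)
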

The proof of Proposition \ref{prop:final} is postponed to Section \ref{sec:auxiliary}.

\begin{rem}
Notice that up to now we have only used a coupling of the two compound Poisson variables (conditionally on $ \bar f ( X^N_{k \delta } ) $) $ \sum_{l=1}^{N_\delta^k }  U_l^k $ and $ B^k_{ N_\delta^k },$ and not the coupling with the continuous time Brownian motion. An embedding of the Gaussian sum into a continuous time Brownian motion will only be needed in the next step.
\end{rem}

\subsection{Concatenation}
The above construction is discrete and produces independent Gaussian random variables $ W^{N, k }_\delta \sim {\mathcal N} ( 0, \delta ),$ one for each time step $ (k \delta, (k+1) \delta],$ which are independent of $(\bar \pi^{i}), i\geq 1, $ and of $ {\mathcal F}_{k \delta }.$ We now complete this collection of random variables to a Brownian motion. 

More precisely, in a first step, we complete for each $k \geq 0, $ $W^{N, k }_\delta $ to a piece of Brownian motion by filling in a Brownian bridge $(W^{N, k }_t )_{ 0 \le t \le \delta}  , $ which is independent of $\bar\pi^i,\, i\geq 1$, and of $ {\mathcal F}_{k \delta},$ conditioned on arriving at the terminal value $ W^{N, k }_\delta.$ In  such a way all Brownian motion pieces $ (W^{N, k }_{\cdot} )_{ k \geq 0 }$ attached to different time steps are independent.

In a second step we then paste together all these pieces of Brownian motions by introducing  for any $ t \in [k \delta, (k+1) \delta) $ and any $ k \geq 0, $  
\begin{equation}\label{eq:defW}
 W^N_t := \sum_{ l=0}^{k-1} W^{N, l }_\delta + W^{N, k }_{ t - k\delta}, \; \mbox{ where we put }  \sum_{l=0}^{- 1 } := 0.
 \end{equation}
Notice that in this way, the $W^{N, l }_\delta , l \geq 0, $ serve as increments for this newly defined Brownian motion $ W^N.$ 

\begin{prop}
$(W_t^N)_{t \geq 0} $ is a standard Brownian motion with respect to its own filtration which is independent of $ \bar \pi^i, i \geq 1 ,$ and of $ X^{i}_0, i \geq 1.$ 
\end{prop}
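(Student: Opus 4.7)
The plan is to verify directly that $W^N$ is a continuous Gaussian process on $\R_+$ with mean zero and covariance $\min(s,t)$, and that it is independent of $\sigma(\bar\pi^i, X_0^i : i \geq 1)$. I would organise the argument around two building blocks: first, that for each $k \geq 0$ the segment $(W^{N,k}_t)_{0\le t \le \delta}$ is a standard Brownian motion on $[0,\delta]$; and second, that the family of segments is jointly independent and jointly independent of the Poisson measures and initial positions. The first claim is the standard endpoint/bridge decomposition of Brownian motion: Proposition~\ref{prop:Wkt} gives $W^{N,k}_\delta \sim \mathcal{N}(0,\delta)$, and by construction we have grafted onto it an independent Brownian bridge on $[0,\delta]$ conditioned to hit $W^{N,k}_\delta$ at time $\delta$; this recovers a genuine Brownian motion on $[0,\delta]$ starting at $0$. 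The joint independence of the $(W^{N,k}_\cdot)_{k\geq 0}$ is built into the definition on the extended probability space.

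Next I would glue these segments via \eqref{eq:defW}. Continuity at the knots $t = k\delta$ is immediate since $W^{N,k}_0 = 0$, so the new segment starts exactly at $\sum_{l=0}^{k-1} W^{N,l}_\delta$, matching the left limit. For the Gaussian and independent-increments properties, I would fix $0 \le s < t$ with $k_s = \lfloor s/\delta \rfloor \le k_t = \lfloor t/\delta \rfloor$ and decompose
\[
W^N_t - W^N_s = \bigl(W^{N,k_s}_\delta - W^{N,k_s}_{s-k_s\delta}\bigr) + \sum_{l=k_s+1}^{k_t-1} W^{N,l}_\delta + W^{N,k_t}_{t-k_t\delta},
\]
where each summand is an increment of a distinct segment, hence these summands are independent centred Gaussians whose variances add up to $t-s$. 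A Brownian motion is characterised by continuity plus independent Gaussian increments of the right variance; applying the same decomposition to a finite collection $0 \le s_1 < t_1 \le s_2 < t_2 \le \cdots$ shows increments over disjoint intervals are independent, which together with the continuity yields that $W^N$ is a standard Brownian motion.

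Finally, for the independence of $W^N$ from $(\bar\pi^i, X_0^i)_{i \geq 1}$, I would note that each endpoint $W^{N,k}_\delta$ is independent of $(\bar\pi^i, X_0^i)_{i \geq 1}$ by Proposition~\ref{prop:Wkt}, and each interpolating bridge was introduced on the extended space to be independent of $(\bar\pi^i, X_0^i)_{i \geq 1}$ and of every other segment. Hence each $\sigma(W^{N,k}_t : 0 \le t \le \delta)$ is independent of $\sigma(\bar\pi^i, X_0^i : i \geq 1)$, and the full $\sigma$-algebra $\sigma(W^N_t : t \geq 0) = \bigvee_{k \geq 0} \sigma(W^{N,k}_t : 0 \le t \le \delta)$ inherits this independence by the joint independence of the segments (applying, e.g., a Dynkin class argument on finite cylinders). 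The main conceptual subtlety — that $W^{N,k}_\delta$ is defined in terms of the random time $N^k_\delta$ which is itself a functional of $\bar\pi^i$ — has already been resolved by the scaling trick in Proposition~\ref{prop:Wkt}, so nothing new has to be checked here.
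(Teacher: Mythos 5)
Your overall approach — endpoint/bridge decomposition of each segment, increment decomposition at the gluing knots, and a cylinder/Dynkin argument — is the natural way to unpack the paper's terse ``successive conditionings,'' and the increment bookkeeping is correct. However, there is a genuine gap in the independence argument. You write that ``the joint independence of the $(W^{N,k}_\cdot)_{k\geq 0}$ is built into the definition on the extended probability space,'' and then deduce that $\bigvee_k \sigma(W^{N,k}_t : 0\le t\le\delta)$ is independent of $\sigma(\bar\pi^i, X^i_0 : i\geq 1)$ ``by the joint independence of the segments.'' Neither step is right as stated.

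First, the independence of the endpoints $W^{N,k}_\delta$ across $k$ is \emph{not} built into the definition: by \eqref{eq: Wkt}, $W^{N,k}_\delta$ is a functional of $B^k$ evaluated at the random time $N^k_\delta$, and the $N^k_\delta$ for different $k$ are strongly correlated (each depends on $X^N_{k\delta}$, hence on the whole past of the particle system, including earlier jump heights). The independence is a nontrivial \emph{consequence} of the Brownian scaling in Proposition~\ref{prop:Wkt}, which washes out the dependence on $N^k_\delta$ and gives that $W^{N,k}_\delta$ is independent of $\mathcal F_{k\delta}$. Second, and more importantly, ``each segment is independent of $\mathcal H := \sigma(\bar\pi^i, X^i_0)$'' together with ``the segments are jointly independent of one another'' does \emph{not} imply that $\bigvee_k\sigma(W^{N,k}_\cdot)$ is independent of $\mathcal H$ (take $X,Y$ i.i.d.\ fair coins and $Z = X\oplus Y$: $X,Y$ are independent, each is independent of $Z$, but $\sigma(X,Y)$ is not independent of $Z$). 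What is actually needed, and what ``successive conditionings'' delivers, is a single induction: set $\mathcal G_k := \mathcal H \vee \sigma\bigl((W^{N,k'}_\cdot)_{k' < k}\bigr)$, note that $\mathcal G_k \subset \sigma\bigl((\bar\pi^i)_i, (X^i_0)_i, (S^{k'},V^{k'},\text{bridge}^{k'})_{k'<k}\bigr)$ so that $N^k_\delta$ is $\mathcal G_k$-measurable while $(S^k,V^k,\text{bridge}^k)$ is independent of $\mathcal G_k$, and then use the scaling in Proposition~\ref{prop:Wkt} to conclude that $W^{N,k}_\cdot$ is independent of $\mathcal G_k$. This gives at once the joint independence of all the segments \emph{and} of $\mathcal H$, which is the statement you need to feed into your cylinder argument.
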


\begin{proof}
This follows immediately from the above construction
by successive conditionings.
\end{proof}

\subsection{Proof of Theorem \ref{th:auxiliary}}\label{sec:35}
We give the proof only for $t_1 = 0; $ the case $ t_1 > 0 $ is treated analogously, decomposing the interval $ (t_1, t_2) $ into intervals of length $\delta.$ 

For $s\geq 0$ define $\tau(s)=k\delta$ for $k\delta\leq s<(k+1)\delta.$
Using Propositions \ref{prop:discretis},  \ref{prop:develSkdelta}, \ref{prop:final} together with the definition \eqref{eq:defW} we can write
\begin{eqnarray}\label{eq:euler}
M_t^N&=&  \int_{ 0}^{\tau(t)}a' ( \xnone_{\tau ( s) }) \sqrt{ \frac1N  \bar f ( X^{N }_{\tau ( s) }) } d W^{N  }_s 
+\frac12 \int_0^{\tau(t)} a'' (\xnone_{\tau ( s) }) \left(  \frac1N \bar f ( X^{N }_{\tau ( s) }) \right) ds \\
&&+O((t+\sqrt t)(\delta^{1/4}+1/\sqrt N))
+\sum_{k=0}^{ [t/\delta]- 1}\sum_{i=1}^3 R_{\delta}^{ N,i, k} ,\nonumber
\end{eqnarray}
where, recalling Proposition \ref{prop:discretis}, $ O((t+\sqrt t)(\delta^{1/4}+1/\sqrt N))$ designs a random variable having $L^1-$norm bounded by $ C_t(t+\sqrt t)(\delta^{1/4}+1/\sqrt N)).$

{\bf Step 1.} Using Proposition  \ref{prop:final} we already know 
\begin{equation}\label{eq:rtotal}
\E \sum_{k=0}^{[\frac t{\delta}]-1} |R_{\delta}^{ N,3,  k}| \leq  C [t/\delta] \ln N/\sqrt N,
\end{equation}
where we have used that $ \ln ( N \delta ) \le \ln N , $ since $ \delta \le 1.$ 

{\bf Step 2.}  We now discuss the control of the error term $\sum_{k=0}^{[\frac t{\delta}]-1}( R_{\delta}^{ N,1, k}+R_{\delta}^{ N,2, k}).$
In what follows we will freely switch between the representation of the finite particle system $ X^N$ by means of the original Poisson random measures $ \pi^i ( ds, dz, du ) $ used in \eqref{eq:dyn} and the alternative representation by means of the centered random variables $ (U_l^k)_{l, k} $ and the Poisson random measures $ \bar \pi^{ i } $ of \eqref{eq:projpi}.
We have
$$
\sum_{k=0}^{[\frac t{\delta}]-1} R_{\delta}^{ N,1, k} = \frac{1}{2 N} \int_{ [0, \tau(t) ] \times \R_+ \times \R}  a'' ( X^{N, 1 }_{\tau ( s-) })  \sum_{j=1}^N u^2 \indiq_{\{ z \le f ( X^{N, j }_{\tau(s-) } )  \}} \tilde \pi^j ( ds, dz, du ) ,$$
where  $ \tilde \pi^j (ds, dz, du ) = \pi^j (ds, dz, du )  - ds dz \nu ( du ) $ are the centered Poisson random measures. Using the independence of $\tilde \pi^j,\; j=1,2\ldots, N,$ together with the fact that these measures are compensated, the $L^2-$isometry for stochastic integrals with respect to compensated Poisson random measures,  the fact that $ U \sim \nu $ possesses a finite fourth moment (since it possesses exponential moments), the boundedness of $a''$ and $f,$ we can write
\begin{eqnarray*}\label{eq:barr2control}
&&\E |\sum_{k=0}^{[\frac t{\delta}]-1} R_{\delta}^{ N,1, k} |^2\leq \frac 1{N^2} \sum_{j=1}^N\E \left (\int_{ [0, \tau(t) ] \times \R_+ \times \R}  a'' ( X^{N, 1 }_{\tau ( s-) })  u^2 \indiq_{\{ z \le f ( X^{N, j }_{\tau(s-) } )  \}}\tilde \pi^j ( ds, dz, du ) \right)^{2} \\
&& \quad  \quad \quad  \quad \quad \leq \frac 1{N^2} \sum_{j=1}^N\E \int_{ [0, t ] \times \R_+ \times \R}  (a'' ( X^{N, 1 }_{\tau ( s-) }))^2   u^4 \indiq_{\{ z \le f ( X^{N, j }_{\tau(s-) } )  \}  )}ds dz d\nu (u) \leq C { t/N}.
\end{eqnarray*}
As a consequence, using the Cauchy-Schwarz inequality,
\begin{equation}\label{eq:barr2control}  
 \E ( |  \sum_{k=0}^{[\frac t{\delta}]-1} R_{\delta}^{ N,1, k} |)\le \left (\E ( |  \sum_{k=0}^{[\frac t{\delta}]-1}R_{\delta}^{ N,1, k} |)^2\right )^{1/2}  \le C \sqrt{ t/N}.
\end{equation}
Moreover we also have the representation 
$$ \sum_{k=0}^{[\frac t{\delta}]-1} R_{\delta}^{ N,2, k}  = \frac{1}{6 N^{3/2}} \int_{ [0, \tau (t) ] \times \R_+ \times \R}  a''' (\xi (u)  )  \sum_{j=1}^N u^3 \indiq_{\{ z \le f ( X^{N, j }_{\tau(s-) } )  \}}  \pi^j ( ds, dz, du ) .$$
Using the exchangeability of the finite neuron system, the boundedness of $a'''$ and of $f$  and the integrability of $|U|^3,$ we get
\begin{equation}\label{eq:barr3control}
  \E ( | \sum_{k=0}^{[\frac t{\delta}]-1} R_{\delta}^{ N,2, k} |) \le C t N^{-1/2}.
\end{equation}
{\bf Step 3.} We now discuss the discretization error. First remark that using the Cauchy-Schwarz inequality and the $L^2-$isometry for the stochastic integral, together with the exchangeability and the  boundedness of $a'$, $a''$and $f$, we immediately get
\begin{eqnarray}\label{eq:smallbout}
&&\esp{\left |\int_{ \tau(t)}^{t}a' ( \xnone_{\tau ( s) }) \sqrt{ \frac1N  \bar f ( X^{N }_{\tau ( s) }) } d W^{N  }_s   + \frac12 \int_{\tau(t)}^{t} a'' (\xnone_{\tau ( s) }) \left(  \frac1N \bar f ( X^{N, j }_{\tau ( s) }) \right) ds        \right |} \nonumber \\
&& \quad \quad   \quad \quad  \quad \quad  \quad \quad \quad \quad   \quad \quad  \quad \quad  \quad \quad \quad \quad   \quad \quad  \quad \quad  \quad \quad \quad \quad   \quad \quad  \quad \quad  \quad \leq C (\sqrt{\delta}+\delta). 
\end{eqnarray}
Finally we deal with the discretization error itself. Using the $L^2-$isometry for stochastic integrals, together with the properties of $a$ and $f,$ we get 
\begin{eqnarray}\label{eq:discretour1}
&&\E \left(  \int_{ 0}^{t}\left (a' ( \xnone_{\tau ( s) }) \sqrt{ \frac1N  \bar f ( X^{N }_{\tau ( s) }) } -
a' ( \xnone_{s }) \sqrt{ \frac1N  \bar f ( X^{N }_{s }) }\right )d W^{N  }_s \right )^2   \nonumber  \\
&&=\E  \int_0^t \left((a' ( \xnone_{\tau ( s) })-a' ( \xnone_{s }) ) \sqrt{ \frac1N  \bar f ( X^{N }_{\tau ( s) }) }+ \right.  \nonumber \\
&&  \quad \quad \quad \quad   \quad \quad  \quad \quad  \quad \quad \quad \quad   \quad \quad  \quad  \left. + 
a' ( \xnone_{s })(\sqrt{ \frac1N  \bar f ( X^{N }_{\tau ( s) }) }- \sqrt{ \frac1N  \bar f ( X^{N }_{s }} ) )\right)^2 ds  \nonumber  \\
&& \leq C  \E \! \int_0^t \!  (a' ( \xnone_{\tau ( s) })-a' ( \xnone_{s }) )^2  \frac1N  \bar f ( X^{N }_{\tau ( s) }) \! +\! 
a' ( \xnone_{s })^2 \!  \frac1N \sum_{i=1}^N | f ( X^{N,i }_{\tau ( s) }) -   f ( X^{N , i }_{s } ) |  ds \nonumber  \\
&& \leq C \E  \int_0^t  |a' ( \xnone_{\tau ( s) })-a' ( \xnone_{s }) |  +
{ \frac1N \sum_{i=1}^N | f ( X^{N,i }_{\tau ( s) }) }-   f ( X^{N, i }_{s } ) |  ds  \nonumber  \\
&&  \quad \quad \quad \quad   \quad \quad \quad \quad    \quad \quad \quad \quad   \leq C \E\int_0^t |\xnone_{\tau ( s) }-\xnone_{s }| ds\leq C_t t\sqrt{\delta}.
\end{eqnarray}
Here to obtain the fourth line, we have used that $ | \sqrt{x} - \sqrt{y} | \le C \sqrt{ |x- y |} ;$ the fifth line follows from the boundedness of $f$ and $a',$ and the last from the Lipschitz continuity of $f$ and $a',$ together with Lemma \ref{lem:euler} stated in Appendix.
Similar arguments imply
\begin{equation}\label{eq:discretour2}
\int_0^{t}\left | a'' (\xnone_{\tau ( s) })   \frac1N \bar f ( X^{N }_{\tau ( s) })- a'' (\xnone_{  s })  \frac1N \bar f ( X^{N }_  s )\right|ds \\
\leq C \int_0^t\E|\xnone_{\tau ( s) }-\xnone_{s }| ds\leq C_t t\sqrt{\delta}.
\end{equation}
Putting together \eqref {eq:rtotal}--\eqref {eq:discretour2} we obtain
\begin{eqnarray*}
 \E |\cR_{0, t}^{ N }| &\leq& C_t ( t + \sqrt{t}) ( \delta^{1/4}+ N^{-1/2} )+[t/\delta]\ln(N)/\sqrt N \\
& \le & C_t ( t + \sqrt{t}) ( \frac{1}{\delta} \ln N/\sqrt N + \delta^{1/4}+ N^{-1/2} ) .
\end{eqnarray*}
Choosing $ \delta = \delta( N) :=  (\ln N)^{4/5} N^{ - 2/5}$ such that the above error terms are balanced, this implies the result.
$\qed$

\section{Proof of Theorem \ref{strongconvergence}}
\subsection{An auxiliary system approaching the limit system}\label{sec:41}
We now use, for fixed $N$ and for $\delta = \delta( N) =  (\ln N)^{4/5} N^{ - 2/5}$ the above constructed Brownian motion $W^N $ together with the collection of Poisson random measures $ \bar \pi^i, i \geq 1,  $ which are independent of $W^N,$ and define a mean field version $(\auxY^{N, 1}, \ldots, \auxY^{N, N}) $ of the limit system $ (\xd^i)_{i \geq 1 }  $ as follows. 
For any $ 1 \le i \le N, $ 
\begin{eqnarray}\label{eq:euleraux}
\auxY^{N, i } _{t } &=& X^{ i }_{0} - \alpha \int_{0}^{t} \auxY^{N, i }_{ s } ds - \int_{[0, t ]   \times \R_+  } \auxY^{N, i }_{s- } \indiq_{\{ z \le f ( \auxY^{N, i}_{ s-}) \}} \bar \pi^i (ds, dz) \\
&&+  \int_{ 0}^{t}\sqrt{ \frac1N  \sum_{j=1}^Nf ( \auxY^{N, j}_{ s }) } d W^{N  }_s   . \nonumber
\end{eqnarray}
This is a classical mean-field approximation of the limit system driven by $ W^N $ and by $ ( \bar \pi^i)_{i \geq 1}.$ We have the following 

\begin{prop}\label{prop:2}
Grant Assumptions~\ref{control}, 
 \ref{ass:2} and \ref{ass:exp}. Then for all $t \geq 0,$ 
$$  \E (\sup_{ s \le t }  |  a(\auxY_{s}^{N,1})-a(X_{s}^{N,1})| ) \le C_t   (\ln N)^{1/5} N^{ - 1/10}.$$ 
\end{prop}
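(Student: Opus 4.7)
}
The natural starting point is to apply It\^o's formula both to $a(X^{N,1}_t)$ and to $a(\auxY^{N,1}_t)$ and then subtract. For $a(X^{N,1}_t)$, It\^o reads
\[
 a(X^{N,1}_t)=a(X^{1}_0)-\alpha\int_0^t a'(X^{N,1}_s)X^{N,1}_s\,ds+\int_{[0,t]\times\R_+}\!\!\!\bigl[a(0)-a(X^{N,1}_{s-})\bigr]\indiq_{\{z\le f(X^{N,1}_{s-})\}}\bar\pi^1(ds,dz)+S_t^N,
\]
and I would feed in Theorem~\ref{th:auxiliary} to rewrite $S_t^N$ as the sum of $\int_0^t a'(X^{N,1}_s)\sqrt{\bar f(X^N_s)/N}\,dW^N_s$, the It\^o correction $\tfrac12\int_0^t a''(X^{N,1}_s)\bar f(X^N_s)/N\,ds$, and the controlled remainder $\cR^N_{0,t}$. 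The same It\^o formula applied to $a(\auxY^{N,1}_t)$ via~\eqref{eq:euleraux} produces the analogous expression, with the second-derivative term arising naturally from the quadratic variation of the Brownian driver. Subtracting gives a semimartingale equation for $\Delta_t:=a(X^{N,1}_t)-a(\auxY^{N,1}_t)$ whose ingredients are a drift difference, a jump integral against the common $\bar\pi^1$, a stochastic integral against the common $W^N$, a second-order drift difference, and the error $\cR^N_{0,t}$.

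I would bound each term in $L^1$ using the key estimate~\eqref{eq:a} for $a$. The drift and second-order differences are bounded pointwise by $C|\Delta_s|+C'\tilde\Delta^N_s$, where $\tilde\Delta^N_s:=\tfrac1N\sum_j|a(X^{N,j}_s)-a(\auxY^{N,j}_s)|$. The big-jump integral can be controlled without BDG by $\int|\cdot|\bar\pi^1$, a direct computation splitting along the region $z\le f(X^{N,1})\wedge f(\auxY^{N,1})$ and its complement yielding again a pointwise $C|\Delta_s|$ bound after integration against $dz\,ds$. Exchangeability gives $\E\tilde\Delta^N_s=\E|\Delta_s|$, so with $g(t):=\E\sup_{s\le t}|\Delta_s|$ all these contributions collapse to $C\int_0^t g(s)\,ds$. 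For the Brownian integral $J_t:=\int_0^t D^3_s\,dW^N_s$ with $D^3_s:=a'(X^{N,1}_s)\sqrt{\bar f(X^N_s)/N}-a'(\auxY^{N,1}_s)\sqrt{\bar f(\auxY^N_s)/N}$, I would use Assumption~\ref{ass:2} ($\inf f>0$) to make $\sqrt\cdot$ Lipschitz on the relevant range, obtaining $(D^3_s)^2\le C|\Delta_s|^2+C(\tilde\Delta^N_s)^2$; Doob/BDG together with $|\Delta|\le 2\|a\|_\infty$ then gives $\E\sup_{s\le t}|J_s|\le C''\sqrt{\int_0^t g(s)\,ds}$.

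Collecting everything produces the central integral inequality
\[
 g(t)\le C\int_0^t g(s)\,ds+C'\sqrt{\int_0^t g(s)\,ds}+\E|\cR^N_{0,t}|,\qquad \E|\cR^N_{0,t}|\le C_t(\ln N)^{1/5}N^{-1/10}
\]
by Theorem~\ref{th:auxiliary}. The main obstacle is exactly the square-root term: a naive Gronwall on this inequality only yields a $t$-dependent constant bound, not convergence to zero as $N\to\infty$. My plan is to handle this by splitting $[0,t]$ into short subintervals of length $\tau=\tau(t)$, applying the same Ito-based estimate on each such interval so that the Brownian contribution becomes $C'\sqrt{\tau}\,\sqrt{g\bigl((k{+}1)\tau\bigr)}$, and choosing $\tau$ small enough so that after an AM--GM absorption this term is dominated by the linear part. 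A standard linear Gronwall then applies on each subinterval, giving a recursion of the form $g\bigl((k{+}1)\tau\bigr)\le(1+O(\tau))g(k\tau)+O(\sqrt{\tau}\,)E^*$, which telescopes across the $O(t/\tau)$ subintervals to produce $g(t)\le C_t(\ln N)^{1/5}N^{-1/10}$. This last step, balancing $\tau$ against the error and managing the accumulated multiplicative constants, is the most delicate part of the argument; everything else is bookkeeping once the estimate~\eqref{eq:a} of $a$ is applied systematically and exchangeability is used to fold the mean-field terms $\tilde\Delta^N_s$ back into $g$.
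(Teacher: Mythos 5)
Your overall architecture matches the paper's: apply It\^o to $a(X^{N,1})$ (using Theorem~\ref{th:auxiliary} to replace $S^N_t$) and to $a(\auxY^{N,1})$ via~\eqref{eq:euleraux}, subtract, control each piece via~\eqref{eq:a} and exchangeability, and then split time into short blocks. The big-jump integral, the drift and second-order terms, and the use of $\inf f>0$ to make $\sqrt\cdot$ Lipschitz are all handled the same way as in the paper.

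However, there is a genuine gap in your BDG step, and it propagates to the final estimate. You replace $|\Delta_s|^2$ by $2\|a\|_\infty|\Delta_s|$ \emph{inside} the BDG integral, so that after Jensen you end up with
\[
\E\sup_{s\le t}|J_s|\ \le\ C''\sqrt{\textstyle\int_0^t g(s)\,ds},
\]
i.e.\ a genuinely sublinear term $\sqrt{\int_0^t g}$ in the unknown $g$. This cannot be absorbed without loss. If you try AM--GM on a subinterval of length $\tau$, writing $C'\sqrt{\tau\,g}\le \tfrac{C'\eta}{2}g+\tfrac{C'\tau}{2\eta}$ and choosing $\eta<1/C'$ to absorb the first term, the cost $\tfrac{C'\tau}{2\eta}$ is $O(\tau)$, which does not vanish as $N\to\infty$; and taking $\tau=\tau(N)\to0$ blows up the number of subintervals and hence the accumulated multiplicative factor. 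Alternatively, solving $g(t)\le C\int_0^t g+C'\sqrt{\int_0^t g}+E^*$ directly with $G=\int_0^t g$ gives $G'\le CG+C'\sqrt G+E^*$, whose solution near $t=0$ satisfies $g(t)\approx E^*+C'\sqrt{E^*t}$; at best this yields a rate $\sqrt{E^*}=(\ln N)^{1/10}N^{-1/20}$, i.e.\ half the exponent of~\eqref{strongbound}.

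The fix is to \emph{not} linearize the square. Bound the integrand by the running supremum before taking the square root: with $D^3_s$ as you define it,
\[
(D^3_s)^2 \le C\Bigl(\sup_{r\le t}|a(X^{N,1}_r)-a(\auxY^{N,1}_r)|\Bigr)^2 + C\Bigl(\tfrac1N\sum_{j}\sup_{r\le t}|a(X^{N,j}_r)-a(\auxY^{N,j}_r)|\Bigr)^2,
\]
so that $\bigl(\int_0^t(D^3_s)^2\,ds\bigr)^{1/2}\le C\sqrt t\,\bigl(\sup_{r\le t}|\cdot|+\tfrac1N\sum_j\sup_{r\le t}|\cdot|\bigr)$ pathwise; the square root falls only on the deterministic factor $t$, and taking expectations and using exchangeability gives $\E\sup_{s\le t}|J_s|\le C\sqrt t\,u^N_t$ with $u^N_t$ to the \emph{first} power. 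This produces the self-referential inequality $u^N_t\le C(t+\sqrt t)\,u^N_t + C_t(t+\sqrt t)(\ln N)^{1/5}N^{-1/10}$, which you can close by choosing $t_*$ with $C(t_*+\sqrt{t_*})\le\tfrac12$ and iterating over blocks $[kt_*,(k+1)t_*]$ exactly as you intended --- but with no square root on the unknown, so no AM--GM is needed and the full rate $N^{-1/10}$ is preserved.
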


\begin{proof}
We  apply again the bijection $a( \cdot)  ,$ now to $\auxY^N.$ Using Ito's formula, equation \eqref{eq:tobecited} together with \eqref{eq:eulerpartcont}, we obtain 
\begin{eqnarray*}
 &&a(\auxY_{t}^{N,1}) -a(X_{t}^{N,1})= - \alpha \int_0^t ( a' ( \auxY_s^{N, 1} ) \auxY_s^{N, 1} - a' ( X_s^{N, 1}) X_s^{N, 1 } ) ds \\
&&\quad \quad \quad \quad   + \frac12 \int_0^t ( a'' ( \auxY_s^{N ,1 } ) (\frac1N \sum_{j=1}^N f ( \auxY_s^{N, j } ) ) -  a'' ( X_s^{N, 1 } ) (\frac1N \sum_{j=1}^N f ( X_s^{N, j } ) )) ds \nonumber \\
&& + \int_0^{ t}\left( a' (   \auxY_{s}^{N,1}) \sqrt{\frac1N \sum_{j=1}^Nf(\auxY_{s}^{N,j})}- a' ( \xnone_{s} ) \sqrt{ \frac1N  \sum_{j=1}^Nf ( X^{N, j }_{s }} )\right) dW_s^N \nonumber \\
&& + \int_{[0,t]\times\r_+  } [ a(0 ) - a(\auxY_{s-}^{N,1}) ] \indiq_{\{z\leq f\left(\auxY_{s-}^{N,1}\right)\}}-[ a(0 ) - a(X_{s-}^{N,1})] \indiq_{\{z\leq f\left(X_{s-}^{N,1}\right)\}} \bar \pi^1(ds,dz) \nonumber \\
&& \quad \quad \quad \quad   \quad \quad \quad \quad    \quad \quad \quad \quad \quad   \quad \quad \quad \quad   \quad \quad \quad \quad   \quad \quad \quad \quad    \quad \quad \quad \quad  \quad \quad \quad \quad + \cR_{0, t}^{N  }.\nonumber
\end{eqnarray*}  
By \eqref {eq:a} we know that $ | x a' (x) - y a' (y) | + |a'' ( x) - a''(y ) | \le C | a(x) - a( y) | . $ Since $f$ is bounded, we therefore obtain,
 for a suitable constant $C,$ and for any $t \geq 0,$ 
\begin{eqnarray*}
&&\frac{1}{C} \left| a(\auxY_{t}^{N,1}) -a(X_{t}^{N,1}) \right|\leq \\
&&     \int_0^t\left( \left| a(\auxY_{s}^{N,1}) -a( X_{s}^{N,1})  \right| + \frac1N \sum_{j=1}^N \int_0^t  | f(\auxY_{s}^{N,j})- f ( X^{N, j }_{s } )| \right)ds   \\
&&+ \left| \int_{[0,t]\times\r_+  } [ a(0 ) - a(\auxY_{s-}^{N,1}) ] \indiq_{\{z\leq f\left(\auxY_{s-}^{N,1}\right)\}}-[ a(0 ) - a(X_{s-}^{N,1})] \indiq_{\{z\leq f\left(X_{s-}^{N,1}\right)\}} \bar \pi^1(ds,dz) \right| \nonumber \\
&&+\left|\int_0^{ t}\left( a' (   \auxY_{s}^{N,1}) \sqrt{\frac1N \sum_{j=1}^Nf(\auxY_{s}^{N,j})}- a' ( \xnone_{s} ) \sqrt{ \frac1N  \sum_{j=1}^Nf ( X^{N, j }_{s }} )\right) dW_s^N \right|+|\cR_{0, t}^{N  }| ,\nonumber
\end{eqnarray*}
where, up to a multiplicative constant,  $\cR_{0, t}^{ N  }$ is given by Theorem \ref{th:auxiliary}.

Introducing now
$$u_t^N:=\esp{\underset{0\leq s\leq t}{\sup}\left|a(\auxY_{s}^{N,1})-a(X_{s}^{N,1})\right|},$$
we have, using once more the properties \eqref{eq:a} of the function $a(\cdot)$ and the Burkholder-Davis-Gundy inequality,
\begin{eqnarray*}
&&u_t^N\leq Ctu_t^N +
\E \Big[  \int_{[0,t]\times\r_+  }  |a(X_{s-}^{N,1} ) - a(\auxY_{s-}^{N,1})|  \indiq_{\{z\leq f\left(\auxY_{s-}^{N,1}\right)\wedge f\left(X_{s-}^{N,1}\right) \}} + \\
&&| a(0 )\!  -\!  a(X_{s-}^{N,1})| \! \indiq_{\{f\left(\auxY_{s-}^{N,1}\right)<z\leq f\left(X_{s-}^{N,1}\right)\}}
\! + \! |a(0 )\!  -\!  a(\auxY_{s-}^{N,1})| \!  \indiq_{\{f\left(X_{s-}^{N,1}\right)<z\leq f\left(\auxY_{s-}^{N,1}\right)\} } \Big] \! d \bar \pi^1(s,z)\nonumber \\  
&&+\E \left[ \left( \int_0^t    \left(  a' (   \auxY_{s}^{N,1})  \sqrt{\frac{1}{N}\sum_{j=1}^Nf(\auxY_{s} ^{N,j})}-a' ( \xnone_{s} ) \sqrt{\frac{1}{N}\sum_{j=1}^Nf(X_{s} ^{N,j})}   \right)^2ds \right)^{1/2} \right] \nonumber \\
&&+  C_t (t + \sqrt{t})   (\ln N)^{1/5} N^{ - 1/10} \nonumber \\
&&\le  Ctu_t^N + C_t (t + \sqrt{t})   (\ln N)^{1/5} N^{ - 1/10} + \\
&& +
 \E \left[ \left( \int_0^t    \left(  a' (   \auxY_{s}^{N,1})  \sqrt{\frac{1}{N}\sum_{j=1}^Nf(\auxY_{s} ^{N,j})}-a' ( \xnone_{s} ) \sqrt{\frac{1}{N}\sum_{j=1}^Nf(X_{s} ^{N,j})}   \right)^2ds \right)^{1/2} \right] ,
 \end{eqnarray*}
where we have used the properties of $ a (\cdot ) ,$ the fact that $ a$ and $f$ are bounded and the bound on $\E|\cR_{0, t}^{ N }|$  given by Theorem \ref{th:auxiliary}.

We now deal with the stochastic integral term. Observe that $\underbar f=\inf_{x\in \R} f> 0 $ implies that we can use Lipschitz property of $x\to\sqrt x$ on $[\underbar f,+\infty[$ and get
$$ \left| \sqrt{\frac{1}{N}\sum_{j=1}^Nf(\auxY_{s} ^{N,j})} - \sqrt{\frac{1}{N}\sum_{j=1}^Nf(X_{s} ^{N,j})} \right| \le C \frac{1}{N} \sum_{j=1}^N | f(\auxY_{s} ^{N,j}) -  f(X_{s} ^{N,j}) | .$$
Therefore, 
\begin{eqnarray}\label{eq:good}
&& \left(a' (   \auxY_{s}^{N,1})  \sqrt{\frac{1}{N}\sum_{j=1}^Nf(\auxY_{s} ^{N,j})}-a' ( \xnone_{s} ) \sqrt{\frac{1}{N}\sum_{j=1}^Nf(X_{s} ^{N,j})}\right)^2  \\
&& \le C | a' (   \auxY_{s}^{N,1}) - a' ( \xnone_{s} )|^2 + C   \left| \sqrt{\frac{1}{N}\sum_{j=1}^Nf(\auxY_{s} ^{N,j})} - \sqrt{\frac{1}{N}\sum_{j=1}^Nf(X_{s} ^{N,j})} \right|^2 \nonumber \\
&& \le C \left( \sup_{s \le t } | a ( \auxY_{s}^{N,1}) - a ( \xnone_s) |\right)^2 + C \left( \frac1N \sum_{j=1}^N \sup_{s \le t } | a (  \auxY_{s}^{N,j}) - a ( X^{N, j }_s) | \right)^2. \nonumber
\end{eqnarray}
Here we have used the properties of the function $a(\cdot) $ to obtain the last line. By the exchangeability of $ (X^N, \auxY^N) , $ this implies 
\begin{equation}\label{eq:iterate}
 u_t^N \le C(t+\sqrt{t})u_t^N+C_t (t + \sqrt{t})  (\ln N)^{1/5} N^{ - 1/10} .
\end{equation} 
In what follows, $t$ is fixed, serving as our time horizon. We choose $ t_* < t $ sufficiently small such that $C(t_*+\sqrt{t_*}) +  C_{t} (t_* + \sqrt{t_*})   \le \frac12 . $ As a consequence, \eqref{eq:iterate} implies
$$ u_{t_*}^N \le (\ln N)^{1/5} N^{ - 1/10}  .$$

We now wish to iterate this inequality over time intervals $ [kt_*, (k+1) t_*] , k \geq 1, $ such that $ kt_* \le t.$ For that sake let  
$$ \Delta_k := \E ( \sup_{ k t_* \le s \le (k+1) t_* \wedge t}|  a(\auxY_{s}^{N,1})-a(X_{s}^{N,1})| ) .$$
Then we obtain similarly to \eqref{eq:iterate}, using now Theorem \ref{th:auxiliary} on $ [ k t_*, (k+1) t_*],$ 
\begin{eqnarray*}
\Delta_k &\le& u^N_{k t_*} + C(t_*+\sqrt{t_*}) \Delta_k + C_t   (t_* + \sqrt{t_*})  (\ln N)^{1/5} N^{ - 1/10} \\
& \le & u^N_{k t_*} + \frac12 \Delta_k + \frac12  (\ln N)^{1/5} N^{ - 1/10} ,
\end{eqnarray*} 
implying 
$$ \Delta_k \le 2  u^N_{k t_*} +   (\ln N)^{1/5} N^{ - 1/10}   .$$ 
Iterating this inequality we end up with 
$$ \sup \{ \Delta_k \ :  k t_* \le t \}  \le 2^{ \ell+1 }   (\ln N)^{1/5} N^{ - 1/10} ,$$
where $\ell = \max \{ k : k t_* \le t \}=  [ t/t_*].$ 

Therefore,
$$
 \E ( \sup_{ s \le t }  |  a(\auxY_{s}^{N,1})-a(X_{s}^{N,1})| ) \le  \Delta_0 + \ldots + \Delta_{  [t/t_*] } 
 \le (t/t_* +1 ) 2^{ t/t_* +1 }   (\ln N)^{1/5} N^{ - 1/10},
$$
implying the result.
\end{proof}

\subsection{Distance between the auxiliary and the limit system. Proof of Theorem \ref{strongconvergence}.}

Now we control the distance between the auxiliary system and the limit system. For that sake we construct the auxiliary system and the limit system using the same Poisson random measures $ \bar \pi^i , i \geq 1, $ and the same underlying Brownian motion $W^N.$ The conditional independence of the coordinates of the limit system implies

\begin{prop}
\label{coupling}
Grant Assumptions~\ref{control}, \ref{ass:2} and \ref{ass:exp}. Let $ (\bar X^{ N, i })_{ i \geq 1 } $ be the limit system driven by the same Brownian motion $ W^N $ and by the same Poisson random measures $ \bar \pi^i , i \geq 1, $ as $ \auxY^N.$  Suppose that $\auxY^{N,i}_0=\bar X^{N,i}_0= X^i_0$ for all $1 \le i \le N.$ Then for all $  t > 0 , $ 
$$  \E (\sup_{s \le t } | a ( \auxY^{N, 1 }_s) - a ( \bar X^{N, 1}_s) | ) \le C_t N^{-1/2}.$$
\end{prop}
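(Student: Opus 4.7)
My plan is to proceed as in the proof of Proposition~\ref{prop:2}, but where the mean-field volatility $\sqrt{\frac{1}{N}\sum_j f(\auxY^{N,j}_s)}$ is now compared against its conditional limit $\sqrt{\E[f(\bar X^1_s)\mid \W_s]}$. Since $\auxY^N$ and $\bar X^N$ are driven by the same $\bar\pi^i$ and the same $W^N$, starting from the same initial positions, I apply It\^o's formula to $a(\auxY^{N,1}_t)-a(\bar X^{N,1}_t)$. Using the properties \eqref{eq:a} of $a$, the drift term and the big-jump term produce contributions of the form $C\int_0^t v_s^N\,ds$, where
$$v_t^N:=\E\bigl[\sup_{s\le t}|a(\auxY^{N,1}_s)-a(\bar X^{N,1}_s)|\bigr],$$
handled exactly as in the proof of Proposition~\ref{prop:2} (the big-jump term being absorbed via the bijection and the rejection-coupling computation). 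Hence the only new ingredient concerns the Brownian stochastic integral.

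After the BDG inequality, I need to bound the expectation of
$$\Bigl(\int_0^t\Bigl(a'(\auxY^{N,1}_s)\sqrt{\tfrac1N\textstyle\sum_j f(\auxY^{N,j}_s)}-a'(\bar X^{N,1}_s)\sqrt{\E[f(\bar X^1_s)\mid\W_s]}\Bigr)^2 ds\Bigr)^{1/2}.$$
I will insert the intermediate quantity $\sqrt{\frac{1}{N}\sum_j f(\bar X^{N,j}_s)}$ to split the volatility difference into two parts. The first part compares the auxiliary and limit empirical variances; since $\inf f>0$, the square root is Lipschitz on $[\underline f,\infty)$, so this part is bounded by $\frac{C}{N}\sum_j|f(\auxY^{N,j}_s)-f(\bar X^{N,j}_s)|\le\frac{C}{N}\sum_j|a(\auxY^{N,j}_s)-a(\bar X^{N,j}_s)|$ by \eqref{eq:a}, which by exchangeability of the pair $(\auxY^N,\bar X^N)$ contributes $Cv_s^N$ in expectation. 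Combined with the $|a'(\auxY^{N,1}_s)-a'(\bar X^{N,1}_s)|$ Lipschitz bound on the outer $a'$ factor, this gives the usual closing terms.

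The second part, which is the heart of the proof, is the mean-field fluctuation
$$\sqrt{\tfrac1N\textstyle\sum_j f(\bar X^{N,j}_s)}-\sqrt{\E[f(\bar X^1_s)\mid\W_s]}.$$
Here I invoke the conditional propagation of chaos: conditionally on $\W_s$, the variables $\bar X^{N,1}_s,\ldots,\bar X^{N,N}_s$ are i.i.d.\ with common conditional law having $\E[f(\bar X^1_s)\mid\W_s]$ as the conditional expectation of $f$. Therefore
$$\E\Bigl[\bigl(\tfrac1N\textstyle\sum_j f(\bar X^{N,j}_s)-\E[f(\bar X^1_s)\mid\W_s]\bigr)^2\Bigr]=\frac{1}{N}\E\bigl[\mathrm{Var}(f(\bar X^1_s)\mid\W_s)\bigr]\le\frac{\|f\|_\infty^2}{N},$$
and the Lipschitz property of $\sqrt{\cdot}$ on $[\underline f,\infty)$ transfers this bound to the square-root difference. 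After Cauchy--Schwarz in the time integral, this term contributes at most $C\sqrt{t}/\sqrt{N}$.

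Collecting, I obtain an inequality of the form $v_t^N\le C(t+\sqrt{t})v_t^N+C_t/\sqrt{N}$ (the $\sqrt{t}$ coming from the BDG/Cauchy--Schwarz step). This cannot be closed directly for large $t$, so I use the same short-time iteration as in the proof of Proposition~\ref{prop:2}: choose $t_*$ small enough that $C(t_*+\sqrt{t_*})\le 1/2$, obtain $v_{t_*}^N\le C/\sqrt{N}$, then iterate over intervals $[kt_*,(k+1)t_*]$ picking up a factor $2^{\lceil t/t_*\rceil}$ which is absorbed in $C_t$. The main obstacle is keeping the conditional i.i.d.\ structure of the limit system visible: the rate $N^{-1/2}$ only survives because the fluctuation estimate uses conditional independence given $\W_s$, which is precisely what the limit system \eqref{eq:dynlimit1} was designed to provide.
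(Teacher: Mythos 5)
The paper itself gives no argument here: it simply cites inequality (4.6) of \cite{ELL_2020}. Your proposal reconstructs a complete proof, and the reconstruction is essentially correct and follows what one expects the cited argument to be. The decisive ingredients are all there: It\^o for $a(\auxY^{N,1})-a(\bar X^{N,1})$, the properties \eqref{eq:a} to handle the drift and the reset jump, BDG for the martingale part, the insertion of the intermediate volatility $\sqrt{\frac1N\sum_j f(\bar X^{N,j}_s)}$, Lipschitz continuity of $\sqrt{\cdot}$ on $[\underline f,\infty)$, exchangeability of the pair $(\auxY^N,\bar X^N)$, the conditional-i.i.d.\ structure of the limit system given $\W_s$ yielding the fluctuation bound $\E\bigl[\bigl(\frac1N\sum_j f(\bar X^{N,j}_s)-\E[f(\bar X^1_s)\mid\W_s]\bigr)^2\bigr]\le C/N$, and the short-time iteration to close the Gronwall-type loop. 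Your identification of the conditional variance estimate as the reason the $N^{-1/2}$ rate emerges is exactly the right diagnosis.

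Two small points to tighten. First, when you apply It\^o to $a$ you omit the second-order term $\frac12\int_0^t\bigl[a''(\auxY^{N,1}_s)\tfrac1N\sum_j f(\auxY^{N,j}_s)-a''(\bar X^{N,1}_s)\,\E[f(\bar X^1_s)\mid\W_s]\bigr]ds$; it is not of the form $C\int_0^t v_s^N\,ds$ on its own, because it also contains the same mean-field fluctuation you isolate in the diffusion term. It is handled identically (insert $\tfrac1N\sum_j f(\bar X^{N,j}_s)$, use $|a''(x)-a''(y)|\le C|a(x)-a(y)|$, exchangeability, and the $L^1$ version of the conditional variance bound), producing an extra $Ct/\sqrt N$, but it should be stated. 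Second, the conditional expectation appearing in the volatility of $\bar X^{N,j}$ is $\E[f(\bar X^{N,j}_s)\mid\W_s]$, which equals $\E[f(\bar X^{N,1}_s)\mid\W_s]$ by conditional exchangeability; your notation $\E[f(\bar X^1_s)\mid\W_s]$ should be read with this superscript, and the fact that the conditional-i.i.d.\ property transfers to the system driven by $W^N$ (rather than an abstract $W$) deserves a sentence, since it rests on $W^N$ being independent of $\bigl(\bar\pi^i\bigr)_{i\geq 1}$ and of the initial data.
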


\noindent This result is inequality (4.6) of \cite{ELL_2020}.\\

We conclude with the
\begin{proof}[Proof of Theorem~\ref{strongconvergence}]
The result is now a straightforward consequence of Propositions~\ref{prop:2} and~\ref{coupling}.
\end{proof}

\section{Proofs}\label{sec:auxiliary}
\subsection{Proof of Proposition \ref{prop:discretis}}
\begin{proof}
Using the fact that $a\in\C^3_b(\R, \R_+)$, we immediately get a bound on the symmetrization error.
\begin{eqnarray}\label{eq:symerror}
&&\E \left[  \int_{ [0, t ] \times \R_+ \times \R  }(a(X^{N,1}_{s-}+u/\sqrt N)-a(X^{N,1}_{s-})) \indiq_{\{ z \le f ( X^{N, 1 }_{ s-}) \}} \pi^{ 1 } (ds, dz,du) \right| \\
 && \le \E{ \int_0^t\int_{\R}\left| a' ( X^{N, 1 }_s)  \frac{u}{\sqrt{N}} + a''( X^{N, 1 }_s ) \frac{u^2}{{2 N}}+a^{3} (\xi (u) )  \frac{u^{3}}{6 N\sqrt{N}}\right |ds\nu(du)} \leq \frac {Ct}{\sqrt N } . \nonumber
\end{eqnarray}
We now estimate the error due to time discretization. Denote
 \begin{eqnarray*}
 \bar R_t^{N, \delta}&:=&\sum_{j=1}^N  \int_{[0, t ] \times \R_+ \times \R  } [ a( \xnone_{s-} + u / \sqrt{N}) - a( \xnone_{s-}) ]  \indiq_{\{ z \le f ( X^{N, j }_{ s-}) \}}  \pi^j ( ds, dz, du) -\\
&& \sum_{j=1}^N  \int_{ [0, t]\times \R_+ \times \R  } [ a( \xnone_{\tau(s-) } + u / \sqrt{N}) - a( \xnone_{\tau(s-)}) ]  \indiq_{\{ z \le f ( X^{N, j }_{\tau ( s-)}) \}}  \pi^j ( ds, dz, du) .
\end{eqnarray*}

Using Taylor's formula at order three, we can write 
$\bar R_t^{N, \delta}:=\bar R_t^{N,1, \delta}+\bar R_t^{N,2, \delta}+\bar R_t^{N,3, \delta},$
where $\bar R_t^{N,i, \delta}$ resumes the contribution of the $i$-th derivative, $ i=1,2,3.$
Using the exchangeability of the finite system, the fact that $f$ is bounded together with the properties of the distance function $a(\cdot),$ we obtain
\begin{eqnarray*}
&&  \E ( |  \bar R_t^{N,1,  \delta}|) \le 
  \E \left|  \sum_{j=1}^N  \int_{ [0, t ] \times \R_+ \times \R  }[ a' ( X^{N, 1 }_{s-}) - a' ( \xnone_{\tau(s-) } ] \frac{u}{\sqrt{N}}  \indiq_{\{ z \le f ( X^{N, j }_{ s-}) \}} \pi^j ( ds, dz, du) \right|  \\
&& +    \E \left|  \sum_{j=1}^N  \int_{ [0, t ] \times \R_+ \times \R  }a' ( X^{N, 1 }_{\tau(s-)})  \frac{u}{\sqrt{N}} [  \indiq_{\{ z \le f ( X^{N, j }_{\tau ( s-)}) \}} - \indiq_{\{ z \le f ( X^{N, j }_{ s-}) \}}] \pi^j ( ds, dz, du) \right|. 
\end{eqnarray*} 
To estimate the first expectation, we use the Cauchy-Schwarz inequality, the independence of $\pi^j,\; j=1,2,\ldots , N,$ the $L^2-$isometry for stochastic integrals with respect to compensated Poisson random measures, the fact that $f$ is bounded and $a'$ Lipschitz (recall that $ a''$ is bounded), and the bound  $\E | X^{N, 1}_{\tau ( s) } - X^{N, 1 }_s|  \le C_t \sqrt{\delta},\;   
s \le t $ (see Lemma \ref{lem:euler} below),  to deduce that 
\begin{eqnarray*}
&&\E \left|  \sum_{j=1}^N  \int_{ [0, t ] \times \R_+ \times \R  }[ a' ( X^{N, 1 }_{s-}) - a' ( \xnone_{\tau(s-) }) ] \frac{u}{\sqrt{N}}  \indiq_{\{ z \le f ( X^{N, j }_{ s-}) \}} \pi^j ( ds, dz, du) \right|  \\
 &&\quad \quad \le C \left( \int_0^t  \E | X^{N, 1}_{\tau ( s) } - X^{N, 1 }_s|^2 ds \right)^{1/2}  \le  C_t \sqrt{t \delta}.
\end{eqnarray*}
Similar arguments give
\begin{eqnarray}\label{eq:rn1}
&& \E \left|  \sum_{j=1}^N  \int_{ [0, t ] \times \R_+ \times \R  }a' ( X^{N, 1 }_{\tau(s-)})  \frac{u}{\sqrt{N}} [  \indiq_{\{ z \le f ( X^{N, j }_{\tau ( s-)}) \}} - \indiq_{\{ z \le f ( X^{N, j }_{ s-}) \}}] \pi^j ( ds, dz, du) \right|  \\
&& \le C \left( \int_0^t \frac1N \sum_{j=1}^N  \E | f( X^{N, j}_{\tau ( s) }) - f(X^{N, j }_s) | ds \right)^{1/2}  \le  C_t \sqrt{t } \delta^{1/4} .\nonumber
\end{eqnarray} 
To treat the higher order derivatives, we use again the exchangeability of $(X^{N,j}),\; j=1,\ldots N,$ the boundedness of $f$ and of the third derivative of $a$ and the Lipschitz continuity of $ a''$  to deduce
\begin{equation}\label{eq:rn2}
\E ( |  \bar R_t^{N,2,  \delta}|) \le C_t\sqrt\delta \quad \mbox{ and }  \quad \E ( |  \bar R_t^{N,3,  \delta}|) \le C t/\sqrt N.
\end{equation}
\eqref{eq:rn1} and \eqref{eq:rn2} together with the assumption $\delta <1$ imply that 
$$ \E ( |  \bar R_t^{N, \delta}|)\leq   C_t \sqrt{t } \delta^{1/4} 
+  C_t\sqrt\delta+C t/\sqrt N\leq C_t(\delta^{1/4}+N^{-1/2}),$$
which ends the proof.
\end{proof}

\subsection{Proof of Proposition \ref {prop:U}}
%
Conditioning on $ X^N_{k \delta } = x ,$ 
$$\Pi^k ( du ) := \sum_{j=1}^N \pi^j ( ( k\delta, (k+1) \delta] \times [0, f ( x^j)   ] , du ) $$
is a Poisson random measure on $ \R$ having intensity $ \delta \bar f( x)  \nu ( du ) .$ Its total number of jumps is given by $ N_\delta^k $ introduced in \eqref{eq: Nkdelta}. 

The basic properties of Poisson random measures imply that we have the representation
$$ \Pi^k ( du ) = \sum_{l=1}^{N_\delta^k} \delta_{U_l^k } ( du) , $$
where  the $ ( U_l^{k } )_{l \geq 1 } $ are i.i.d., distributed as $  \nu, $ independent of $ N_\delta^{k } $ and of $ {\mathcal F}_{k \delta}.$ 

Conditioning on $ X^N_{k \delta } = x $ and using Fubini's theorem, we have that 
\begin{eqnarray*}
  M_\delta^{N, k }  &= &\sum_{j=1}^N\int_{(k\delta,(k+1)\delta]  \times \R_+ \times \R }[a(x^1+u/\sqrt N)-a(x^1)]  \indiq_{\{ z \le f ( x^j ) \}} \pi^{ j } (ds, dz,du)\\
  &= &\int_\R  [a(x^1+u/\sqrt N)-a(x^1)]  \left(  \sum_{j=1}^N \int_{ (s, z) \in (k\delta,(k+1)\delta]  \times \R_+}  \indiq_{\{ z \le f ( x^j ) \}} \pi^j ( ds, dz , du ) \right) \\
  &=&  \int_\R  [a(x^1+u/\sqrt N)-a(x^1)]  \Pi^k (du ) = \sum_{l=1}^{N_\delta^k} [ a ( x + U_l^k /\sqrt{N} ) - a(x) ],
\end{eqnarray*}  
and this concludes the proof. 
$\qed$
%
%

\subsection{Proof of Proposition \ref{prop:final}}
\begin{proof}
Denote
$
 \cE_\delta^k :=    \sqrt{N_\delta^k/ \delta} \;  -  \sqrt{ \bar f ( X^N_{k \delta})}.  
$
As a consequence of Proposition \ref{prop:KtN} and Proposition \ref {prop:Wkt}, we immediately  obtain 
$$ \frac{1}{\sqrt{N}}\sum_{l=1}^{N_\delta^k }  U_l^k   =  \sqrt{\frac1N  \bar f ( X^N_{k \delta})} \;  W^{N,k}_\delta  + \frac{1}{\sqrt{N}} \cE_\delta^k\;  W^{N,k}_\delta +  \frac{1}{\sqrt{N}} K_\delta^{k} . $$
Remember that  $W^{N, k}_\delta \sim {\cal N} (0, \delta) $ is independent of $(\bar \pi^{i}), i\geq 1, $ and of $ {\mathcal F}_{k \delta}$. As a consequence,  $ W^{N, k}_\delta   $ and 
$\cE_\delta^k$ are independent and $\E[|W^{N, k}_\delta |]\leq \sqrt \delta.$

Taking $ \delta = \delta (N) = (\ln N)^{4/5} N^{ - 2/5} $ such that $ N \delta (N) \to \infty $ as $ N \to \infty,$ we now show that there exists a suitable constant such that 
\begin{equation}\label{eq: toshow}
 \esp{ | \cE_\delta^k | } \le C.
 \end{equation}
Of course, on the event $\{  N_\delta^k = 0 \}, $ the error term $ \cE_\delta^k $ has no chance to be small. However, conditionally on $ {\mathcal F}_{k \delta}, $  $N_\delta^k \sim Poiss ( \bar f ( X^N_{k \delta} ) \delta ), $ and $ f (\cdot ) \geq \underline f > 0 $ being lowerbounded, 
deviation estimates for Poisson random variables imply that 
\begin{equation}\label{eq:largedev}
\mathbb{P}( N_\delta^k   \le  N \underbar f \delta  / 2 ) \le e^{ - c_1 N  \delta },  
\end{equation}
where $c_1 $ does not depend on $N $ nor on $ \delta$ (see Appendix for a proof). We may therefore introduce the event 
\begin{equation*}
G^k := \{ N_\delta^k / \delta  >  N \underbar f  / 2 \}.
\end{equation*}
We use the Lipschitz continuity of $ x \mapsto \sqrt{x} $ on $[ N\underbar f /2, \infty ), $ with Lipschitz constant 
$$ \frac{1}{ 2 \sqrt{ N \underbar f /2 } } \le C N^{ - 1/2 },
$$
such that for all $ x , y \geq N \underbar f /2,$ 
$$ | \sqrt{x} - \sqrt{y} | \le C N^{-1/2} |x-y|.$$
In what follows, we apply the above inequality, on the event $G^k,$ to $ x= N_\delta^k /\delta $ and to $y = \bar f ( X^N_{ k \delta} ) $. Using  $Var(N^k_{\delta}|\F_{k\delta})=\delta\bar f(X^N_{k\delta})$ this gives 
\begin{eqnarray*}
 \esp{ |\cE_\delta^k | 1_{G^k} }\leq\frac C{\delta\sqrt  N}\esp{| {N^k_{\delta}}-\delta\bar f(X^N_{k\delta})|}
&\leq & \frac C{\delta\sqrt  N}\esp{  (N^k_{\delta}-\delta\bar f(X^N_{k\delta}))^2       }^{1/2}       \\
&  \leq & \frac C{\delta\sqrt N}\sqrt {\delta N \|f\|_{\infty}}=\frac {C \sqrt{ \|f\|_\infty}}{\sqrt\delta}.
\end{eqnarray*}
Moreover, the fact that $f$ is bounded implies that  $ \esp{ |\cE_\delta^k |^2 } \le CN  $ and 
$$ \esp{ (\cE_\delta^k)^2}^{1/2} (\mathbb{P} ( (G^k)^c ))^{1/2} \sqrt{\delta} \le  C \sqrt{N \delta}  e^{ - (c_1/2)  N \delta}.$$ 
Finally, 
$$ \esp{ | \cE_\delta^k |} \le \esp{ | \cE_\delta^k | 1_{G^k} } +  \esp{ | \cE_\delta^k | 1_{(G^k)^c} }\le  \esp{ |\cE_\delta^k | 1_{G^k} } + \esp{ (\cE_\delta^k)^2}^{1/2} (\mathbb{P} ( (G^k)^c ))^{1/2} .$$ 

Since $ N \delta (N) \to \infty $ as $ N \to \infty ,$ 
we have that $ \sup_N  \sqrt{N \delta}  e^{ - (c_1/2)  N \delta} < \infty ,$   which implies the assertion \eqref{eq: toshow}. 
Together with the bound $\E[|K^k_{\delta}|]\leq C\ln(N\delta)$ given by the Proposition \ref{prop:KtN} it completes the proof.
\end{proof}


\begin{appendix}
\section*{}
\subsection{Useful a priori upper bounds and remaining proofs}
Assumption \ref{control} together with the fact that $f$ is bounded implies that the following apriori bounds hold.
\begin{prop}
For all $ t > 0, $ for $X_{s}^{N,1}$ given by \eqref{eq:dyn},
\begin{equation}\label{eq:apriori1}
\underset{N\in\n^*}{\sup}\underset{0\leq s\leq t}{\sup}\esp{\left(X_{s}^{N,1}\right)^2}<+\infty \mbox{ and }  
\underset{N\in\n^*}{\sup}\esp{\underset{0\leq s\leq t}{\sup}\left|X^{N,1}_s\right|}<+\infty.
\end{equation}
\end{prop}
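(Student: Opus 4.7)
The plan is to establish the two bounds sequentially: first the uniform second-moment control, then the $L^1$-bound on the supremum, the latter relying on the former through the quadratic variation of the reset-jump martingale.

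\textbf{Step 1 (Second moment).} I would apply It\^o's formula to $(X^{N,1}_t)^2$ using the SDE representation~\eqref{eq:dyn}. The deterministic drift contributes $-2\alpha (X^{N,1}_s)^2\,ds$. The reset jump for neuron~$1$ contributes $-(X^{N,1}_{s-})^2$ at rate $f(X^{N,1}_{s-})$, yielding a non-positive compensator which may simply be discarded. For each $j\neq 1$, a jump of $\pi^j$ displaces $X^{N,1}$ by $U/\sqrt N$, producing $(X^{N,1}_{s-}+U/\sqrt N)^2-(X^{N,1}_{s-})^2 = 2U X^{N,1}_{s-}/\sqrt N + U^2/N$. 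After taking expectations, the martingale parts vanish and Assumption~\ref{control} ($\int u\,\nu(du)=0,\ \int u^2\,\nu(du)=1$), together with $\normi{f} < +\infty$, leaves $\E[(X^{N,1}_t)^2] \leq \E[(X^1_0)^2] - 2\alpha\int_0^t \E[(X^{N,1}_s)^2]\,ds + \frac{N-1}{N}\normi{f}\,t$. Gr\"onwall's lemma then yields a bound on $\E[(X^{N,1}_t)^2]$ depending only on $\E[(X^1_0)^2]<\infty$ (finite by Assumption~\ref{control}), $\alpha$ and $\normi{f}$, uniformly in $N\in\n^*$ and $s\in[0,t]$.

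\textbf{Step 2 (Supremum in $L^1$).} From~\eqref{eq:dyn} and the triangle inequality, $|X^{N,1}_t| \leq |X^1_0| + \alpha\int_0^t |X^{N,1}_s|\,ds + |J_t| + |M^{\rm sj}_t|$, with $J_t$ the (signed) big-jump integral of neuron~$1$ and $M^{\rm sj}_t$ the small-jump contribution from neurons $j\neq 1$. I would compensate $J_t = \int_0^t X^{N,1}_s f(X^{N,1}_s)\,ds + \widetilde J_t$, where $\widetilde J$ is a purely discontinuous martingale with $\langle \widetilde J\rangle_t = \int_0^t (X^{N,1}_s)^2 f(X^{N,1}_s)\,ds$. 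Using $\normi{f}<+\infty$, the drift part is bounded by $\normi{f}\int_0^t |X^{N,1}_s|\,ds$, while BDG combined with Jensen's inequality gives $\E \sup_{s\leq t}|\widetilde J_s| \leq C \bigl(\normi{f}\int_0^t \E[(X^{N,1}_s)^2]\,ds\bigr)^{1/2}$, which is finite thanks to Step~1. As for $M^{\rm sj}_t$, the centering $\int u\,\nu(du)=0$ makes it already a martingale, with $\langle M^{\rm sj}\rangle_t = \frac{1}{N}\sum_{j\neq 1}\int_0^t f(X^{N,j}_s)\,ds \leq \normi{f}\,t$, so BDG provides $\E\sup_{s\leq t}|M^{\rm sj}_s| \leq C\sqrt{\normi{f}\, t}$ uniformly in $N$. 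Combining these controls yields $\E\sup_{s\leq t}|X^{N,1}_s| \leq \E|X^1_0| + (\alpha+\normi{f})\int_0^t \E\sup_{u\leq s}|X^{N,1}_u|\,ds + C_t$, and Gr\"onwall closes the proof with a constant independent of $N$.

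\textbf{Main obstacle.} The delicate point is the reset-jump term: it is both signed and multiplicative in the process itself, and its compensated martingale $\widetilde J$ carries a quadratic variation of order $(X^{N,1})^2$. This is precisely why the two bounds must be proved in this order: the $L^2$ estimate of Step~1 is indispensable for applying BDG to $\widetilde J$ in Step~2. Once this is realised, the small-jump martingale is harmless thanks to the centering of $\nu$ and the $1/\sqrt N$ scaling, which is exactly the feature that makes the diffusive limit well-defined.
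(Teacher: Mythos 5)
Your proof is correct and takes the standard route. The paper itself does not reprove this estimate---it cites Lemma~4.1 of \cite{ELL_2020}---but your argument is exactly the one that works: It\^o's formula on $x\mapsto x^2$, discarding the non-positive reset compensator $-\E[(X^{N,1}_s)^2 f(X^{N,1}_s)]$ and using $\normi{f}<\infty$ together with the centering and unit variance of $\nu$ for the uniform second-moment bound, then BDG on both martingales (the reset-jump one needing the $L^2$ control from Step~1, the small-jump one having quadratic variation $\le \normi{f}\,t$ uniformly in $N$) for the $L^1$-supremum. Two minor observations. First, in Step~1 Gr\"onwall is superfluous: the coefficient of $\int_0^t \E[(X^{N,1}_s)^2]\,ds$ is $-2\alpha<0$, so you may simply drop that term and read off $\E[(X^{N,1}_t)^2]\le\E[(X^1_0)^2]+\normi{f}\,t$; similarly, in Step~2, since Step~1 already gives $\sup_{s\le t}\E|X^{N,1}_s|<\infty$ uniformly in $N$, the drift contributions are $O(t)$ and you can conclude directly, without re-entering a fixed-point loop. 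Second, passing expectations through the martingale parts strictly requires a localization (stop at $\tau_n=\inf\{s:|X^{N,1}_s|>n\}$, get an $n$-independent bound, send $n\to\infty$ via Fatou using non-explosion, which holds since $f$ is bounded and the jumps are a.s.\ finite); you implicitly assume this but do not mention it. Neither point affects the substance of the argument.
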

\noindent The above assertion is Lemma 4.1 of \cite{ELL_2020}. \\

\begin{lem}\label{lem:euler}
Suppose $\delta \le 1.$ Then we have for all $ s \le t , $ $ \E ( | \xnone_{\tau( s) } - \xnone_s | ) \le C_t \sqrt{\delta }.$  
\end{lem}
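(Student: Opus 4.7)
The plan is to write $X^{N,1}_s - X^{N,1}_{\tau(s)}$ as the sum of the three contributions appearing in the SDE \eqref{eq:dyn} on the time interval $[\tau(s), s]$, and to bound each in $L^1$ separately, using that $|s - \tau(s)| < \delta$.

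First I would treat the deterministic drift. Using \eqref{eq:apriori1}, we get
$$\E\Bigl|\alpha \int_{\tau(s)}^{s} X^{N,1}_u\, du \Bigr| \le \alpha \delta\, \E\Bigl(\sup_{u \le t}|X^{N,1}_u|\Bigr) \le C_t\, \delta.$$
Next, for the reset (``big jump'') term, using Fubini and the compensator $dz\, ds$ together with the fact that $f$ is bounded,
$$\E\Bigl| \int_{(\tau(s),s]\times\R_+\times\R} X^{N,1}_{u-}\indiq_{\{z\le f(X^{N,1}_{u-})\}}\pi^1(du,dz,du')\Bigr| \le \|f\|_\infty \int_{\tau(s)}^{s} \E|X^{N,1}_u|\, du \le C_t\, \delta.$$
Finally, the small-jump contribution. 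Since $\nu$ is centred, the process
$$M_t := \frac{1}{\sqrt N}\sum_{j\neq 1}\int_{[0,t]\times\R_+\times\R} u\, \indiq_{\{z\le f(X^{N,j}_{u-})\}}\pi^j(du,dz,du')$$
is a martingale; by the $L^2$-isometry for compensated Poisson integrals together with the independence of the $\pi^j$, and using $\int u^2\nu(du)=1$ from Assumption~\ref{control},
$$\E\bigl(M_s - M_{\tau(s)}\bigr)^2 = \frac{1}{N}\sum_{j\neq 1}\E\int_{\tau(s)}^{s} f(X^{N,j}_u)\, du \le \|f\|_\infty\, \delta.$$
Cauchy--Schwarz then gives $\E|M_s - M_{\tau(s)}| \le \sqrt{\|f\|_\infty}\, \sqrt{\delta}$.

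Summing the three bounds and using $\delta < 1$ to absorb the $O(\delta)$ terms into $O(\sqrt{\delta})$ yields $\E|X^{N,1}_{\tau(s)} - X^{N,1}_s| \le C_t \sqrt{\delta}$. There is no essential obstacle here: the argument is a routine combination of the a priori moment bound \eqref{eq:apriori1}, boundedness of $f$, and the $L^2$-isometry; the only point worth noting is that the small-jump martingale dictates the rate $\sqrt{\delta}$ (the diffusive scaling in $1/\sqrt{N}$ is compensated by the sum over $N-1$ neurons), while drift and big jumps contribute merely $O(\delta)$.
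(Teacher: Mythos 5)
Your proof is correct and follows essentially the same route as the paper's: decompose the increment into drift, reset (big-jump), and small-jump martingale contributions, bound the first two by $C_t\delta$ using the a priori moment bounds and boundedness of $f$, and bound the martingale in $L^2$ via the Poisson isometry (plus Cauchy--Schwarz) to obtain the dominant $\sqrt{\delta}$ rate. The only cosmetic difference is that the paper reduces to $\tau(s)=0$ and lumps drift and reset together in a single application of \eqref{eq:apriori1}, whereas you bound the reset term separately via the compensator; the substance is identical.
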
 

\begin{proof}
It suffices to perform the proof for $ s < \delta $ and $ \tau (s) = 0.$ 
Then 
\begin{eqnarray*}
\xnone_s - X^1_0 &=& - \alpha \int_0^s \xnone_r dr -   \int_{[0,s]\times\r_+\times\r} X^{N, 1}_{r-}  \indiq_{ \{ z \le  f ( X^{N, 1}_{r-}) \}} \pi^1 (dr,dz,du) \\
&&+  \frac{1}{\sqrt{N}}\sum_{ j \neq 1 } \int_{[0,s]\times\r_+\times\r}u \indiq_{ \{ z \le  f ( X^{N, j}_{r-}) \}} \pi^j (dr,dz,du). 
\end{eqnarray*} 
Thanks to our a priori estimates \eqref{eq:apriori1}, 
$$ \E \left| \int_0^s \xnone_r dr +   \int_{[0,s]\times\r_+\times\r} X^{N, 1}_{r-}  \indiq_{ \{ z \le  f ( X^{N, 1}_{r-}) \}} \pi^i (dr,dz,du) \right| \le C_t \delta.$$
Moreover
\begin{eqnarray*}
&& \E  \left| \frac{1}{\sqrt{N}}\sum_{ j \neq 1 } \int_{[0,s]\times\r_+\times\r}u \indiq_{ \{ z \le  f ( X^{N, j}_{r-}) \}} \pi^j (dr,dz,du)\right|  \\
&& \le \left(  \frac1N \sum_{ j \neq 1 }  \E \left(\int_{[0,s]\times\r_+\times\r}u \indiq_{ \{ z \le  f ( X^{N, j}_{r-}) \}} \pi^j (dr,dz,du)\right)^2 \right)^{1/2} \le \sqrt{\| f \|_\infty } \sqrt{ \delta }  ,
\end{eqnarray*} 
for $s \le \delta,$ since $f$ is bounded and $ \int_\r u^2 \nu ( du ) = 1.$ 

\end{proof}

\noindent We finally give the \\

\noindent{\it Proof of \eqref{eq:largedev}.}
It is well-known that for a standard Poisson process $(Z_t)_{t \geq 0} $ having  rate $1,$ 
$$ \mathbb{P} (   \sup_{ s\le t } | Z_s - s | \geq \varepsilon ) \le 2 \exp ( - t h( \varepsilon / t)) , $$
for all $ \varepsilon < t , $ 
where $ h(x) = (1 +x ) \ln (1+x) - x$ (see e.g. Theorem 2.3.1 of \cite{Bremaudbook}). 

Since conditionally on  
$\{ X^N_{ k \delta} = x\}  , $  $N_\delta^k  \sim Z_{\bar f ( x) \delta}  , $ we apply the above inequality with $ t = \| f\|_\infty N \delta $ and $ \varepsilon = N \underbar f \delta /2 $ and obtain 
\begin{eqnarray*}
 \mathbb{P} ( N^k_\delta \le  N \underbar f \delta /2 | X^N_{ k \delta}  = x   )&=& \mathbb{P} (  Z_{ \bar f (x)  \delta } \le   N \underbar f \delta /2 ) \le \mathbb{P} (  Z_{ \bar f (x)  \delta } \le   \bar f ( x ) \delta /2 )   \\
 &=&  \mathbb{P} ( Z_{ \bar f (x) \delta }- \bar f ( x ) \delta  \le  - \bar f ( x ) \delta /2 ) \le  \mathbb{P} ( \sup_{ s \le t } | Z_s - s | \geq  \underbar f N \delta /2 ) \\
 & \le & 2 \exp ( - t h ( \varepsilon /t )) , 
\end{eqnarray*}
since $N \underbar f \delta \le  \bar f ( x ) \delta \le N \| f \|_\infty \delta.$ Observing that for our choice of $t$ and $ \varepsilon, $ $\varepsilon/ t = \frac{ \underbar f }{ \| f \|_\infty 2 } $ does not depend on $ N, $ nor on $ \delta $ and integrating with respect to the law of $ X^N_{ k \delta } $ implies \eqref{eq:largedev}.
\qed

\subsection{Why the method of Kurtz does not work in a diffusive scaling}\label{sec:appendixkurtz}
The goal of this section is to explain why the approach proposed in \cite{kurtz78}, which is based on time-change and which we have sketched in our Introduction, in Section \ref{sec:kurtz} above, does not apply in the present frame. Consider the auxiliary particle system $ \tilde X^N $ introduced in \eqref{eq:auxnaive} above. In what follows, to stress the main ideas, we suppose w.l.o.g. that the reset terms are absent, that is, we have for each $ 1 \le i \le N, $ 
$$ X_t^{N, i } = X_0^i  - \alpha \int_0^t  X_s^{N , i } ds + \frac{1}{\sqrt{N}} Z_{ A_t^N}$$
and
$$ \tilde X_t^{N, i } = X_0^i  - \alpha \int_0^t \tilde X_s^{N , i } ds + \frac{1}{\sqrt{N}} B_{\tilde A_t^N},$$
and the two processes $Z$ and $ B $ are coupled according to \cite{komlos_approximation_1975, komlos_approximation_1976} such that 
$N^{-1/2} Z_{ A_t^N} =  N^{-1/2} B_{A_t^N} + K_t ,$ where $K_t$ is the error term coming from the coupling.
For simplicity, we work on some fixed time interval $ [0, T ]$ such that $ \alpha T \le \frac12.$  

We start investigating $  \frac{1}{\sqrt{N}}\left| B_{A_t^{N}} -B_{\tilde A_{t}^{N}}\right|  .$ Notice that we have the upper bound 
$$ A_t^{N} \vee \tilde A_t^{N} \le N T \|f \|_\infty.$$
So we introduce the  modulus of continuity of $B$ for this time horizon
$$ M := \sup_{ u, v \le N T  \|f\|_\infty  }  \frac{ |  B_u -  B_v|}{ \varphi ( |u-v|)} ,$$
where 
$$ \varphi ( x) = \sqrt{ x ( 1 + \ln ( \frac{ N T \|f \|_\infty  }{ x}) ) } , \; 0 \le x \le N T \|f\|_\infty .$$ 
Then  there exists some $ \lambda > 0 $ such that $ \esp{ e^{\lambda M} } < \infty $ (see e.g. \cite{kurtz78}, Lemma 3.2). We obtain the upper bound 
$$  \frac{1 }{\sqrt{N}}\left| B_{A_t^{N}} -B_{\tilde A_{t}^{N}}\right| \le  \frac{1 }{\sqrt{N}} M \varphi \left(  | A_t^{N } - \tilde A_t^{N} |\right) .$$
Since $ f$ is Lipschitz, we certainly have for all $ t \le T,$ 
\begin{equation}\label{eq:modcont}
 | A_t^{N } - \tilde A_t^{N} |  \le  \|f\|_{Lip} T   \sum_{i = 1 }^N  \sup_{ s \le T} | X^{N,i}_s -  \tilde X^{N, i }_s | = :   \|f\|_{Lip} T  G_T^N .
\end{equation}
Fix some $ \beta \in (0, 1 )  $ and consider the event 
$$ {\mathcal {G}}_N := \left\{   G_T^N  \geq  N^\beta   \right\} .$$
On $ {\mathcal {G}}^c_N, $ we have by exchangeability
$$ \esp{ \sup_{s \le T } | X^{N, i }_s - \tilde X^{N, i }_s| \indiq_{ {\mathcal {G}}^c_N} } = \frac{1}{N} \esp{ G_T^N \indiq_{ {\mathcal {G}}^c_N} } ) \le N^{ - ( 1 - \beta ) } \to 0 .$$
So it is sufficient to work on $ {\mathcal {G}}_N.$ 
On this set, using that $ \varphi $ is increasing, we have 
$$\varphi \left(  | A_t^{N } - \tilde A_t^{N} |\right)  \le C_T \sqrt{ G_T^N} \sqrt{ 1 + \ln N} ,$$
such that for each $ 1 \le i \le N $ and $ t \le T, $ recalling the upper bound on $|K_t| $ given in \eqref{eq:kt}, 
\begin{eqnarray*} 
 |X_t^{N, i } - \tilde X_t^{N, i } | &\le& \alpha T \sup_{s \le T} | X_s^{N, i } - \tilde X_s^{N, i } | + C_T \frac{1}{\sqrt{N}} M \sqrt{ G_T^N} \sqrt{ 1 + \ln N} + \frac{\ln ( T N \|f\|_\infty )}{\sqrt{N}} E  \\
& \le & \frac12 \sup_{s \le T} | X_s^{N, i } - \tilde X_s^{N, i } | + C_T \frac{1}{\sqrt{N}} M \sqrt{ G_T^N} \sqrt{ 1 + \ln N} + \frac{\ln ( T N \|f\|_\infty )}{\sqrt{N}} E  .
\end{eqnarray*}
Taking the supremum over all $ t \le T $ on the left hand side, subtracting $\frac12 \sup_{s \le T} | X_s^{N, i } - \tilde X_s^{N, i } | $ on both sides and multiplying by $2,$ we thus obtain, after having summed over all $ i , $ 
\begin{equation}\label{eq:notgood}
 G_T^N \le  C_T \sqrt{N} M \sqrt{ G_T^N} \sqrt{ 1 + \ln N} + C_T \sqrt{N} (1 +  \ln N )  E  , 
\end{equation} 
 and this inequality holds on  ${\mathcal {G}}_N.$

The conclusion now follows as in the proof of Theorem 2 in \cite{annaetal}. Indeed, the above inequality is quadratic in $ x:= \sqrt{ G_T^N} $ and can be read as $ p(x) = x^2 + b x + c \le 0 ,$ for suitable $b, c.$ All positive $x$ satisfying $p(x) \le 0 $ are necessarily such that $ x^2 \le b^2 ,$ and this upper bound is sharp for large $N$ (we skip the details),  whence 
$$ G_T^N  \le C_TN M^2 ( 1 + \ln N) ,$$
which holds on  ${\mathcal {G}}_N.$ 

Once more, by exchangeability, 
$$ \esp{ \indiq_{\mathcal {G}_N} \sup_{ s \le T } | X^{N, i }_s - \tilde X_s^{N, i } |} = \frac{1}{N} \esp{ \indiq_{\mathcal {G}_N} G_T^N }  \le C_T (1 + \ln N) \esp{M^2 },$$
which does not vanish as $ N \to \infty .$ 

Notice that in \cite{kurtz78} who works in the scaling of the law of large numbers, in \eqref{eq:notgood} above, the term $ \sqrt{N}$ on the right hand side disappears, and this is why this approach works in this case -- while in the present framework of a diffusive scaling the above control does not allow to conclude. 
\end{appendix}

\begin{acks}[Acknowledgments]
The authors thank the anonymous referees for the positive feedback, many useful remarks and careful reading. E.L. thanks Vlad Bally for discussions about Euler schemes and the non-need of approximating the particle system by its Euler scheme, and Nicolas Fournier for pointing out an error in an early version of this manuscript. This work has been conducted as part of  the  FAPESP project Research, Innovation and Dissemination Center for Neuromathematics(grant 2013/07699-0) and of the ANR project ANR-19-CE40-0024.
\end{acks}

\bibliography{biblio}

\begin{thebibliography}{15}

\bibitem[\protect\citeauthoryear{Br\'emaud}{2020}]{Bremaudbook}
\begin{bbook}[author]
\bauthor{\bsnm{Br\'emaud},~\bfnm{Pierre}\binits{P.}}
(\byear{2020}).
\btitle{Point {P}rocess {C}alculus in {T}ime and {S}pace}.
\bpublisher{Springer}.
\end{bbook}
\endbibitem

\bibitem[\protect\citeauthoryear{Chevallier, Melnykova and
  Tubikanec}{2021}]{annaetal}
\begin{barticle}[author]
\bauthor{\bsnm{Chevallier},~\bfnm{Julien}\binits{J.}},
  \bauthor{\bsnm{Melnykova},~\bfnm{Anna}\binits{A.}} \AND
  \bauthor{\bsnm{Tubikanec},~\bfnm{Irene}\binits{I.}}
(\byear{2021}).
\btitle{Diffusion approximation of multi-class {H}awkes processes:
  {T}heoretical and numerical analysis}.
\bjournal{Advances in Applied Probability}
\bvolume{53}
\bpages{716--756}.
\end{barticle}
\endbibitem

\bibitem[\protect\citeauthoryear{Cormier, Tanr{\'e} and Veltz}{2020}]{cormier}
\begin{barticle}[author]
\bauthor{\bsnm{Cormier},~\bfnm{Quentin}\binits{Q.}},
  \bauthor{\bsnm{Tanr{\'e}},~\bfnm{Etienne}\binits{E.}} \AND
  \bauthor{\bsnm{Veltz},~\bfnm{Romain}\binits{R.}}
(\byear{2020}).
\btitle{Long time behavior of a mean-field model of interacting neurons}.
\bjournal{Stochastic Processes and their Applications}
\bvolume{130}
\bpages{2553-2595}.
\bdoi{https://doi.org/10.1016/j.spa.2019.07.010}
\end{barticle}
\endbibitem

\bibitem[\protect\citeauthoryear{Daley and Vere-Jones}{2008}]{DaleyII}
\begin{bbook}[author]
\bauthor{\bsnm{Daley},~\bfnm{D.~J.}\binits{D.~J.}} \AND
  \bauthor{\bsnm{Vere-Jones},~\bfnm{D.}\binits{D.}}
(\byear{2008}).
\btitle{An {Introduction} to the {Theory} of {Point} {Processes}: {Volume}
  {II}: {General} {Theory} and {Structure}},
\bedition{Second} ed.
\bpublisher{Springer}.
\end{bbook}
\endbibitem

\bibitem[\protect\citeauthoryear{De~Masi
  et~al.}{2015}]{de_masi_hydrodynamic_2015}
\begin{barticle}[author]
\bauthor{\bsnm{De~Masi},~\bfnm{A.}\binits{A.}},
  \bauthor{\bsnm{Galves},~\bfnm{A.}\binits{A.}},
  \bauthor{\bsnm{L{\"o}cherbach},~\bfnm{E.}\binits{E.}} \AND
  \bauthor{\bsnm{Presutti},~\bfnm{E.}\binits{E.}}
(\byear{2015}).
\btitle{Hydrodynamic {Limit} for {Interacting} {Neurons}}.
\bjournal{Journal of Statistical Physics}
\bvolume{158}
\bpages{866--902}.
\bdoi{10.1007/s10955-014-1145-1}
\end{barticle}
\endbibitem

\bibitem[\protect\citeauthoryear{Erny, L{\"o}cherbach and
  Loukianova}{2021}]{ELL_2020}
\begin{barticle}[author]
\bauthor{\bsnm{Erny},~\bfnm{Xavier}\binits{X.}},
  \bauthor{\bsnm{L{\"o}cherbach},~\bfnm{Eva}\binits{E.}} \AND
  \bauthor{\bsnm{Loukianova},~\bfnm{Dasha}\binits{D.}}
(\byear{2021}).
\btitle{Conditional propagation of chaos for mean field systems of interacting
  neurons}.
\bjournal{Electronic J. Probab}
\bvolume{26}
\bpages{1-25}.
\end{barticle}
\endbibitem

\bibitem[\protect\citeauthoryear{Ethier and Kurtz}{2005}]{ethier_markov_2005}
\begin{bbook}[author]
\bauthor{\bsnm{Ethier},~\bfnm{Stewart}\binits{S.}} \AND
  \bauthor{\bsnm{Kurtz},~\bfnm{Thomas}\binits{T.}}
(\byear{2005}).
\btitle{Markov {Processes}. {Characterization} and {Convergence}}.
\bpublisher{Wiley Series In Probability And Statistics}.
\end{bbook}
\endbibitem

\bibitem[\protect\citeauthoryear{Fournier and
  L{\"o}cherbach}{2016}]{fournier_toy_2016}
\begin{barticle}[author]
\bauthor{\bsnm{Fournier},~\bfnm{Nicolas}\binits{N.}} \AND
  \bauthor{\bsnm{L{\"o}cherbach},~\bfnm{Eva}\binits{E.}}
(\byear{2016}).
\btitle{On a toy model of interacting neurons}.
\bjournal{Annales de l'Institut Henri Poincar{\'e} - Probabilit{\'e}s et
  Statistiques}
\bvolume{52}
\bpages{1844--1876}.
\end{barticle}
\endbibitem

\bibitem[\protect\citeauthoryear{Galves and
  L{\"o}cherbach}{2013}]{galves_infinite_2013}
\begin{barticle}[author]
\bauthor{\bsnm{Galves},~\bfnm{A.}\binits{A.}} \AND
  \bauthor{\bsnm{L{\"o}cherbach},~\bfnm{E.}\binits{E.}}
(\byear{2013}).
\btitle{Infinite systems of interacting chains with memory of variable length -
  a stochastic model for biological neural nets}.
\bjournal{Journal of Statistical Physics}
\bvolume{151}
\bpages{896--921}.
\end{barticle}
\endbibitem

\bibitem[\protect\citeauthoryear{Ikeda and
  Watanabe}{1989}]{ikeda_stochastic_1989}
\begin{bbook}[author]
\bauthor{\bsnm{Ikeda},~\bfnm{Nobuyuki}\binits{N.}} \AND
  \bauthor{\bsnm{Watanabe},~\bfnm{Shinzo}\binits{S.}}
(\byear{1989}).
\btitle{Stochastic {Differential} {Equations} and {Diffusion} {Processes}},
\bedition{Second} ed.
\bpublisher{North-Holland Publishing Company}.
\end{bbook}
\endbibitem

\bibitem[\protect\citeauthoryear{Koml{\'o}s, Major and
  Tusn{\'a}dy}{1975}]{komlos_approximation_1975}
\begin{barticle}[author]
\bauthor{\bsnm{Koml{\'o}s},~\bfnm{J.}\binits{J.}},
  \bauthor{\bsnm{Major},~\bfnm{P.}\binits{P.}} \AND
  \bauthor{\bsnm{Tusn{\'a}dy},~\bfnm{G.}\binits{G.}}
(\byear{1975}).
\btitle{An approximation of partial sums of independent {RV}'s, and the sample
  {DF}. {I}}.
\bjournal{Zeitschrift f{\"u}r Wahrscheinlichkeitstheorie und Verwandte Gebiete}
\bvolume{32}
\bpages{111--131}.
\end{barticle}
\endbibitem

\bibitem[\protect\citeauthoryear{Koml{\'o}s, Major and
  Tusn{\'a}dy}{1976}]{komlos_approximation_1976}
\begin{barticle}[author]
\bauthor{\bsnm{Koml{\'o}s},~\bfnm{J.}\binits{J.}},
  \bauthor{\bsnm{Major},~\bfnm{P.}\binits{P.}} \AND
  \bauthor{\bsnm{Tusn{\'a}dy},~\bfnm{G.}\binits{G.}}
(\byear{1976}).
\btitle{An approximation of partial sums of independent {RV}'s, and the sample
  {DF}. {II}}.
\bjournal{Zeitschrift f{\"u}r Wahrscheinlichkeitstheorie und Verwandte Gebiete}
\bvolume{34}
\bpages{33--58}.
\bdoi{10.1007/BF00532688}
\end{barticle}
\endbibitem

\bibitem[\protect\citeauthoryear{Kurtz}{1978}]{kurtz78}
\begin{barticle}[author]
\bauthor{\bsnm{Kurtz},~\bfnm{T.~G.}\binits{T.~G.}}
(\byear{1978}).
\btitle{Strong approximation theorems for density dependent {M}arkov chains}.
\bjournal{Stochastic Processes and their Applications}
\bvolume{8}
\bpages{223--240}.
\end{barticle}
\endbibitem

\bibitem[\protect\citeauthoryear{Prodhomme}{2020}]{adrien}
\begin{barticle}[author]
\bauthor{\bsnm{Prodhomme},~\bfnm{Adrien}\binits{A.}}
(\byear{2020}).
\btitle{Strong Gaussian approximation of metastable density-dependent Markov
  chains on large time scales}.
\bjournal{ArXiv preprint, https://arxiv.org/abs/2010.06861}.
\end{barticle}
\endbibitem

\bibitem[\protect\citeauthoryear{Robert and Touboul}{2016}]{touboul}
\begin{barticle}[author]
\bauthor{\bsnm{Robert},~\bfnm{P.}\binits{P.}} \AND
  \bauthor{\bsnm{Touboul},~\bfnm{J.}\binits{J.}}
(\byear{2016}).
\btitle{{On the Dynamics of Random Neuronal Networks}}.
\bjournal{J Stat Phys}
\bvolume{165}
\bpages{545--584}.
\end{barticle}
\endbibitem

\end{thebibliography}
\end{document}